\documentclass[11pt]{article}
\usepackage[utf8]{inputenc}
\usepackage[pagebackref=true]{hyperref}
\usepackage{amssymb,amsmath,bm,bbm,amsthm,microtype,geometry,xcolor,tocbibind}
\usepackage{amstext,amsfonts,mathtools}
\usepackage{mathrsfs,bm,bbm,enumerate}
\usepackage{graphicx,color,tikz}
\usepackage{caption,subcaption,geometry}
\usepackage{babel}
\usepackage{dsfont}

\geometry{verbose,tmargin=1.9cm,bmargin=1.9cm,lmargin=2.0cm,rmargin=2.0cm,headheight=1.2cm,headsep=1.2cm,footskip=1.2cm}

\definecolor{DarkBlue}{rgb}{0.15,0.15,0.55}
\hypersetup{linktocpage=true,colorlinks=true,allcolors=DarkBlue}
\usepackage[capitalise]{cleveref}

\newtheorem{proposition}{Proposition}[section]
\newtheorem{theorem}[proposition]{Theorem}

\theoremstyle{definition}
\newtheorem{definition}[proposition]{Definition}

\def\One{\mathbbm{1}} 
\def\CF{{\widehat{\mathscr{P}}}}
\def\D{{\mathcal{D}}}
\def\S{{\mathcal{S}}}
\def\R{{\mathcal{R}}}
\def\Lop{\mathrm{L}} 
\def\Top{\mathrm{T}} 
\def\One{\mathbbm{1}} 
\def\C{ \mathbb{C}}

\def\N{ \mathbb{N}}
\def\R{ \mathbb{R}}

\def\drm{\mathrm{d}}

\def\Der{\mathrm{D}}

\newcommand{\lp}{\left(}
\newcommand{\rp}{\right)}
\newcommand{\I}{\leqslant}

\newcommand{\dd}{\, \mathrm{d}}
\def\s{\geqslant}
\newcommand{\leb}[2]{\mathrm{Leb}_{#1}\lp #2 \rp}
\newcommand{\m}{\mathcal}
\newcommand{\scal}[2]{\left \langle #1 , #2 \right \rangle }

\newcommand{\PR}{\mathbb{P}}

\newcommand{\abs}[1]{\left\lvert#1\right\rvert}

\newcommand{\expect}[1]{\mathbb{E}\left[ #1 \right] }

\title{The Domain of Definition of the L\'evy White Noise}
\author{Julien Fageot and Thomas Humeau}

\begin{document}
\maketitle

\begin{abstract}
{It is possible to construct L\'evy white noises as generalized random processes in the sense of Gel'fand and   Vilenkin, or as an independently scattered random measures introduced by  Rajput and   Rosinski.}
In this article, we unify those two approaches by extending the L\'evy white noise $\dot X$, defined as a generalized random process, to an independently scattered random measure. 
We are then able to give general integrability conditions for L\'evy white noises, thereby maximally enlarging their domain of definition.
Based on this connection, we provide new criteria for the practical determination of the domain of definition, including specific results for the subfamilies of Gaussian, symmetric-$\alpha$-stable, generalized Laplace, and compound Poisson white noises. 
We also apply our results to formulate a general criterion for the existence of generalized solutions of linear stochastic partial differential equations driven by a L\'evy white noise.

\end{abstract}

\textit{Keywords:} L\'evy white noise, generalized random processes, independently scattered random measures, {SPDEs}.  \\

\textit{MSC 2010 subject classifications:} 60G20, 60G57, 60G51, 60H15, 60H40

\section{Introduction} 

	\subsection{Two Constructions for the L\'evy White Noise}\label{subsec:3constructions}

This paper is dedicated to the study of $d$-dimensional L\'evy white noises.
These entities have been defined in at least two ways and are available in the literature as follows.

 \textit{The L\'evy white noise as a generalized random process:} 
		A generalized random process can be \emph{a priori} observed though  test functions $\varphi$ living in the space $\D(\R^d)$ of compactly supported and smooth functions. The construction of L\'evy white noises as generalized random processes is established by I.M. Gel'fand and N.Y. Vilenkin in \cite[Chapter III]{GelVil4}.
		There, the L\'evy white noise $\dot{X}$ is defined as a collection of random variables $\langle \dot{X}, \varphi \rangle$ with $\varphi \in \D(\R^d)$, 
		as presented in Section \ref{sec:GRP} below. 
		
		The distributional point of view of Gel'fand and Vilenkin offers the advantage of defining a L\'evy white noise as a random generalized function. It can then be used as the driving term of a linear stochastic partial differential equation of the form $\Lop s = \dot{X}$, hence benefiting from the theory of generalized functions.
		
 \textit{The L\'evy white noise as an independently scattered random measure:} 
		In this framework, the L\'evy white noise is specified as a random measure; that is, a collection of random variables $\langle \dot{X}, \One_A\rangle$ indexed by Borelian subsets $A \subset \R^d$ with finite Lebesgue measure. Independently scattered random measures are investigated by B.S. Rajput and J. Rosinski in \cite{Rajput1989spectral}. We provide a recap in Section \ref{sec:domain}. Note that Rajput and Rosinski {do not explicitly consider L\'evy white noises, which are specific independently scattered random measure with additional stationarity properties.} 

It can be argued that the specification of the L\'evy white noise as an independently scattered random measure is more informative than its description as a generalized random process. 
Indeed, random measures are random generalized functions, while the converse is in general not true.
However, the framework of generalized random processes is especially adapted for the study of linear stochastic partial differential equation (SPDE).
In particular, the (weak) derivative together with the integration (when it is well-defined) of a generalized random process is still a generalized random process, but this is false in general for independently scattered random measures. 
 
Beyond their differences, these two constructions are deeply connected and implicitly specify the same mathematical object, although describing it from different perspectives.
They range from more to less general, in the sense that measures generalize functions, and generalized functions generalize measures \cite[Chapter 1]{Schwartz1966distributions}. This remains valid for \textit{random} functions, measures, and generalized functions.
Each approach brings its own advantages, and it  is therefore interesting to precisely connect them in order to benefit from their strengths when applied to different contexts.

	\subsection{The Domain of Definition of the L\'evy White Noise}\label{subsec:whydomain}

Since we are interested in the study of linear SPDE driven by a L\'evy white noise, we start from a L\'evy white noise defined as a generalized random process. 
Any L\'evy white noise $\dot{X}$ can \emph{a priori} be observed  through a test function $\varphi$ living in the space of compactly supported and smooth functions.
However, it is often desirable in practice to extend the definition of the family of random variables $\langle \dot{X} , f \rangle$ to functions $f$ that are possibly neither smooth nor compactly supported. Hereafter, we provide some motivations in that direction.

\begin{itemize}

	\item \emph{Expansion of   L\'evy white noises  into orthonormal bases:} 
	Consider an orthonormal basis $(f_n)$ of $L^2(\R^d)$ and a L\'evy white noise $\dot{X}$. We would like to characterize when it is reasonable to consider the family of coefficients $\langle \dot{X} , f_n \rangle$. As a motivational example, we mention our recent works~\cite{aziznejad2018wavelet,Fageot2017multidimensional} where we use the Daubechies wavelets coefficients of a L\'evy white noise to accurately estimate its regularity. Daubechies wavelets are compactly supported, but have a limited smoothness~\cite{Daubechies1988orthonormal}. We shall see that the expansion on any Daubechies wavelet basis is possible for any L\'evy white noise. 
	More generally, we are interested in considering bases whose elements are neither compactly supported nor smooth. 

	\item \emph{Localizing the probability law of the L\'evy white noise:} 
	The domain of definition of the L\'evy white noise coincides with the domain of continuity of its characteristic functional. There are strong connections between the continuity properties of the characteristic functional and the localization of the process in appropriate function spaces~\cite{Fageot2017besov,Hida2004,Ito1984foundations}. The more we extend the domain of definition, the more we can learn about the regularity of the L\'evy white noise. 	
	
	\item \emph{Construction of generalized solutions of linear SPDEs driven by L\'evy white noise:} 
	By extending the domain of definition of the L\'evy white noise, one weakens the conditions on a differential operator $\Lop$ such that the stochastic differential equation $\Lop s = \dot{X}$ admits a solution $s$ as a generalized random process in $\D'(\R^d)$. Indeed, we have formally that, for $\varphi \in \D(\R^d)$,
	\begin{equation} \label{eq:formalrequirement}
		\langle s,  \varphi \rangle = \langle \Lop^{-1} \dot{X} , \varphi \rangle = \langle \dot{X} , (\Lop^{-1})^{*} \{\varphi\} \rangle,
	\end{equation}
	where $(\Lop^{-1})^{*}$ is the adjoint of $\Lop^{-1}$.
	We therefore see that $(\Lop^{-1})^{*} \{\varphi\} $ must  belong to the domain of definition of $\dot{X}$ in order to give a meaning to \eqref{eq:formalrequirement}.

	\item {\emph{From L\'evy white noises to L\'evy processes and L\'evy fields:}}
	A L\'evy process $X$ is a solution of the stochastic differential equation $\Der X = \dot{X}$ with boundary condition $X(0)=0$. Here, $\Der$ is the derivative operator and $\dot{X}$ is a L\'evy white noise in ambient dimension $d=1$. It is well known that, contrary to the L\'evy white noise, the L\'evy process is well-defined pointwise, with {c\`adl\`ag}\footnote{\emph{C\`adl\`ag} is the acronym (derived from French) for right continuous functions with left limit at each point.}  trajectories~\cite{Bertoin1998levy}. Formally, a L\'evy process  satisfies the relation $X(t) = \langle \dot{X}, \One_{[0,t]} \rangle$. In particular, we expect to be able to define rigorously $\langle \dot{X} , f \rangle$ for test functions of the form $f = \One_{[0,t]}$. This question was addressed in \cite{Lee2006levy} and will also be deduced from our results. Identical considerations can be made for the $d$-dimensional L\'evy sheet $X = (X_t)_{t\in \R^d}$ that generalizes L\'evy processes in higher dimensions~\cite{Adler1983representations,Dalang2015Levy,Dalang1992} and is solution of the stochastic differential equation $\dot{X} = \frac{\partial^d}{\partial _1 \cdots \partial_d} X$.
\end{itemize}

The previous examples show some benefits of extending the domain of definition of the L\'evy white noise. 
We also want to go further and identify the broadest set of test functions such that the random variable $\langle \dot{X}, f \rangle$ is well-defined.
It turns out that the framework of  independently scattered random measures  developed by Rajput and Rosinski \cite{Rajput1989spectral} is especially relevant to achieving this goal. 

	\subsection{Contributions and Outline}\label{subsec:contribution}

The primary contributions of this paper are as follows.
 We connect the definition of a L\'evy white noise as a generalized random process in $\D'(\R^d)$ with the theory of independently scattered random measures investigated by  B.S. Rajput and J. Rosinski \cite{Rajput1989spectral}. We rely on the work of those authors to identify in full generality the domain of definition of  the L\'evy white noise, along with the domain of definition with finite $p$-th moment (Section \ref{sec:domain}). 

\section{L\'evy White Noises as Generalized Random Processes} \label{sec:GRP}

The theory of generalized random processes was initiated in the 50's independently by I.M. Gel'fand~\cite{Gelfand1955generalized} and K. It\^o~\cite{Ito1954distributions}. It has the advantage of allowing the construction of a broad class of random processes, including many instances that do not admit a pointwise representation. 
Being the probabilistic adaptation of the theory of generalized functions of L. Schwartz, generalized random processes are very flexible, and many finite-dimensional results of probability theory have natural extension to this infinite-dimensional setting~\cite{bierme2017generalized,Cartier1963processus,Fernique1967lois,Ito1984foundations}.  
Generalized random processes have been used as a natural framework for CARMA random processes~\cite{Brockwell2010carma} and CARMA random fields~\cite{berger2019levydriven2,berger2019levydriven}, 
for the scaling limits of statistical models in quantum field theory~\cite{abdesselam2018towards,Abdesselam2020second,Simon1979functional}, where the continuous-domain limit fields are often too irregular to admit a pointwise representation~\cite{Camia2015planar,Chelkak2012conformal,Furlan2017tightness}.

The framework also lends itself to the construction of the $d$-dimensional Gaussian white noise, as is exploited in white noise analysis~\cite{Hida1980brownian,Hida2004}, whose goal is to provide an infinite dimensional stochastic calculus.
The extension of white noise analysis from Gaussian to L\'evy white noises have been investigated deeply~\cite{Oksendal2004white}, in particular to solve SPDEs~\cite{Lokka2004stochastic} driven by L\'evy white noise. However, these works mostly deal with second-order L\'evy white noises (or even L\'evy white noises with finite exponential moments), a restriction that we want to avoid. 
More generally, one can describe $d$-dimensional L\'evy white noises---including   Gaussian ones---as   random elements in the space of generalized functions  \cite{GelVil4}.  We can then study linear stochastic partial differential equations driven by a L\'evy white noise, whose solutions are defined as generalized random processes~\cite{aziznejad2018wavelet,Dalang2015Levy,dalang2019random,Fageot2017nterm,hummel2019stochastic,hummel2020sample}. 
This framework has also been applied in signal processing in order to specify stochastic models for sparse signals~\cite{Bostan.etal2013,Unser2014sparse} and images~\cite{Bostan2013map,clarkson2016characteristic,Fageot2015wavelet}. 

	\subsection{Generalized Random Processes}

We denote by $\D(\R^d)$ the   space of infinitely differentiable and compactly supported functions, associated with its usual nuclear topology  \cite{Treves1967}. In particular, $\varphi_n \rightarrow \varphi$ in $\D(\R^d)$ as $n \rightarrow \infty$ if there exists a compact $K \subset \R^d$ that contains the support of all the $\varphi_n$  and if $\lVert \Der^{\bm{\alpha}} \{ \varphi_n - \varphi \} \rVert_{\infty} \rightarrow \infty$ for every multi-index $\bm{\alpha} \in \N^d$. The topological dual of $\D(\R^d)$ is the  space of generalized functions  $\D'(\R^d)$ (often called distributions).
We define a structure of measurability on $\D'(\R^d)$ by considering the cylindrical $\sigma$-field $\mathcal{C}$; that is, the $\sigma$-field generated by the cylinders
$\{ u \in \D'(\R^d), \quad \left( \langle u,\varphi_1 \rangle, \ldots, \langle u, \varphi_N\rangle \right) \in B \}$
with $N\geq 1$, $\varphi_1, \ldots, \varphi_N \in \D(\R^d)$, and $B$ a Borel  measurable set of $\R^N$. 

Let $(\Omega, \mathcal{F}, \mathscr{P})$ be a complete probability space. A random variable is a measurable function from $\Omega$ to $\R$. The space of random variables, $L^0(\Omega)$, is a Fr\'echet space when endowed with the convergence in probability. We also define $L^p(\Omega)$, the space of random variables with finite $p$th-moment, for $0<p< \infty$. The space $L^p(\Omega)$ is a quasi-Banach space when $0<p<1$, and a Banach space for $p \geq 1$.

\begin{definition} \label{def:GRP}
	A \emph{generalized random process} is a linear and continuous function $s$ from $\D(\R^d)$ to $L^0(\Omega)$. The linearity means that, for every $\varphi_1, \varphi_2 \in \D(\R^d)$ and $\lambda \in \R$,
	\begin{equation*}
		s(\varphi_1 + \lambda \varphi_2) = s(\varphi_1) + \lambda s(\varphi_2) \text{ almost surely}.
	\end{equation*}
	The continuity means that if $\varphi_n \rightarrow \varphi$ in $\D(\R^d)$, then $s(\varphi_n)_{n\in \N}$ converges to $s(\varphi)$ in probability.
\end{definition}

Due to the nuclear structure on $\D(\R^d)$, a generalized random process has a version that is a measurable function from $(\Omega,\mathcal{F})$ to $(\D'(\R^d) , \mathcal{C})$ (see \cite[Corollary 4.2]{Walsh1986introduction}).
In other words, a generalized random process is a random generalized function.
We therefore  write $s(\varphi) = \langle s,\varphi\rangle$ where $s$ is a generalized random process and $\varphi \in \D(\R^d)$ a test function. 

\begin{definition}
Let $s$ be a generalized random process.
The \emph{probability law} of  $s$ is the probability measure on $\D'(\R^d)$ defined by 
\begin{equation*}
	\mathscr{P}_s (B) = \mathscr{P} ( \{ s \in B  \} ) = \mathscr{P} ( \{\omega \in \Omega, \ \scal s {\cdot}(\omega) \in B \} )
\end{equation*}
for $B$ in the cylindrical $\sigma$-field $\mathcal{C}$. 
{The \emph{characteristic functional} of $s$ is the Fourier transform of its probability law; that is, the functional $\CF_s : \D(\R^d) \rightarrow \C$ defined by}
\begin{equation*}
	\CF_s(\varphi) = \int_{\D'(\R^d)} \mathrm{e}^{\mathrm{i} \langle u, \varphi \rangle } \drm \mathscr{P}_s(u) = \mathbb{E} \left[ \mathrm{e}^{\mathrm{i} \langle s ,\varphi \rangle } \right].
\end{equation*}
\end{definition}

\textit{Remarks.} Note that $\{ s\in B\} \subset \Omega$ is  measurable because $s :\Omega \rightarrow \mathbb{R}$ is measurable by definition.
 The characteristic functional characterizes the law of $s$ in the sense that two random processes are equal in law if and only if they have the same characteristic functional. 

	\subsection{L\'evy White Noises, Characteristic Exponents, and L\'evy Triplets}

One of the advantages of the theory of generalized random processes is that it allows to properly define random processes which do not admit a pointwise representation. The typical example is the Gaussian white noise. Following \cite{GelVil4}, we define the L\'evy white noise from its characteristic functional.

A random variable $Y$ is said to be infinitely divisible if it can be decomposed as $Y= Y_1 + \cdots + Y_N$, {with $(Y_1, \ldots , Y_N)$ an i.i.d. random vector for every $N$}. The characteristic function of an infinitely divisible random variable cannot vanish and there exists a unique continuous function $\psi$ such that $\CF_Y(\xi) = \exp ( \psi (\xi) )$ (see \cite[Theorem 8.1]{Sato1994levy}). The  log-characteristic function $\psi$ of an infinitely divisible random variable is called its \emph{characteristic exponent}.
{The complete family of characteristic exponents is specified by the L\'evy--Khintchine decomposition. According to \cite[Theorem 8.1]{Sato1994levy}, a function $\psi : \R \rightarrow \C$ is a characteristic exponent if and only if it can  be written as
\begin{equation} \label{eq:LK}
	\psi(\xi) = \mathrm{i} \gamma \xi - \frac{\sigma^2 \xi^2}{2} + \int_{\R} \left( \mathrm{e}^{\mathrm{i} x \xi} - 1 - \mathrm{i} x \xi  \One_{\abs{x}\leq 1} \right) \nu(\drm x),
\end{equation}
for every $\xi\in \R$, with $\gamma \in \R$, $\sigma^2 \geq 0$, and $\nu$ a L\'evy measure, that is a measure on $\R$ with $\int_{\R}  \inf (1, x^2) \nu(\drm x) < \infty$ and $\nu(\{0\}) = 0$.   The triplet $(\gamma,\sigma^2, \nu)$ is uniquely determined and called the \emph{L\'evy triplet} associated to~$\psi$.}

\begin{definition}\label{levynoise}
	A \emph{L\'evy white noise} $\dot{X}$ is a generalized random process with characteristic functional of the form 
	\begin{equation*}
		\CF_{\dot{X}}(\varphi) = \exp\left( \int_{\R^d} \psi(\varphi(t)) \drm t \right)
	\end{equation*}
	for every $\varphi \in \D(\R^d)$, where $\psi$ is a characteristic exponent. 
\end{definition}
The existence of a L\'evy white noises as generalized random processes is proved in \cite{GelVil4} (see also \cite[Theorem 2]{Fageot2014}).
A L\'evy white noise $\dot{X}$ is stationary, meaning that $\dot{X}(\cdot - t_0)$ and $\dot{X}$ has the same law for every $t_0\in\R^d$.
Moreover, $\dot{X}$ is independent at every point in the sense that $\langle \dot{X},\varphi_1 \rangle$ and $\langle \dot{X} ,\varphi_2 \rangle$ are independent when $\varphi_1$ and $\varphi_2$ have disjoint supports.

The random variable $\langle \dot{X},\varphi \rangle$ is  \emph{a priori} well-defined  for $\varphi \in \D(\R^d)$. 
Its characteristic function $\Phi_{\langle s, \varphi \rangle} : \R \rightarrow \C$ is given, for every $\xi \in \R$, by
\begin{equation} \label{eq:CFsphi}
	\Phi_{\langle s, \varphi\rangle} (\xi) =\mathbb{E} \left[ \mathrm{e}^{\mathrm{i} \xi \langle \dot{X} ,\varphi \rangle } \right]
				= \exp \left( \int_{\R^d} \psi( \xi \varphi(t) ) \drm t \right).
	\end{equation}
However, we can reasonably extend the domain of definition of the noise to broader classes of functions.
We illustrate this idea on the Gaussian white noise $ \dot{X}_{\mathrm{Gauss}}$. 
Its characteristic functional is $\CF_{ \dot{X}_{\mathrm{Gauss}}}(\varphi) = \exp( - \sigma^2 \lVert \varphi \rVert^2_2 /2)$ (see \cite{GelVil4}). 
For each $\varphi \in \D(\R^d)$,   $\langle  \dot{X}_{\mathrm{Gauss}} , \varphi \rangle$ is therefore a centered normal random variable with variance $\sigma^2 \lVert \varphi \rVert_2^2$.
One sees  easily that $\langle  \dot{X}_{\mathrm{Gauss}}, f \rangle$ can be extended to every function $f \in L^2(\R^d)$. To do so, consider a sequence $(\varphi_n)_{n\in \N}$ of functions in $\D(\R^d)$ converging to $f \in L^2(\R^d)$ for the usual Hilbert topology of $L^2(\R^d)$. Then, for every $n,m \in \N$, we have 
\begin{equation} \label{eq:cauchysequencegaussian}
\mathbb{E} [ (\langle  \dot{X}_{\mathrm{Gauss}}, \varphi_n \rangle - \langle  \dot{X}_{\mathrm{Gauss}}, \varphi_m \rangle)^2 ]  =\mathbb{E} [\langle  \dot{X}_{\mathrm{Gauss}}, \varphi_n - \varphi_m \rangle^2 ] = \sigma^2 \lVert \varphi_n - \varphi_m \rVert^2_2.
\end{equation}
The sequence $(\varphi_n)_{n\in \N}$ being convergent, it is  a Cauchy sequence of $L^2(\R^d)$. Then, \eqref{eq:cauchysequencegaussian} implies that $(\langle  \dot{X}_{\mathrm{Gauss}}, \varphi_n\rangle )$ is itself a Cauchy sequence in the complete space $L^2(\Omega)$, and hence is convergent in this space. One readily shows that the limit does not depend on the sequence $(\varphi_n)_{n\in \N}$ and we denote it by $\langle  \dot{X}_{\mathrm{Gauss}}, f \rangle$. Then, as will be made more rigorous in the sequel, the linear and continuous functional $\dot{X}_{\mathrm{Gauss}}$ initially defined from $\D(\R^d)$ to $L^0(\Omega)$ is actually a linear and continuous functional from $L^2(\R^d)$ to $L^2(\Omega)$. 

Defining a Gaussian white noise only for smooth and rapidly decaying test functions appears highly conservative. As we shall see, this occurs for any L\'evy white noise. The goal of this paper is precisely to identify the domain of definition  for general L\'evy white noise; that is, the largest possible space of test functions which can be applied to the white noise. The identification of the domain has already been done in \cite{Rajput1989spectral} in the context of independently scattered random measures, and the unification of L\'evy white noise with this notion in Section \ref{unification} allows us to rely on this work. Moreover, we are aiming at practical criteria to identify this domain of definition for specific L\'evy white noises.

\section{Integrability Conditions with respect to L\'evy White Noises} \label{sec:domain}

In this section, a L\'evy white noise is understood as a generalized random process in the sense of Gel'fand and Vilenkin. In section \ref{unification}, we connect this construction with the framework of independently scattered random measures of Rajput and Rosinski \cite{Rajput1989spectral}. Then, in Section \ref{subsec:extension}   we use this connection and the results of \cite{Rajput1989spectral} to extend the domain of definition of L\'evy white noises.

	\subsection{L\'evy White Noises as Independently Scattered Random Measures}\label{unification}

A random measure is a random process whose test functions are indicator functions.
This concept is very popular for stochastic integration, the integral being defined for simple functions (\emph{i.e.}, linear combinations of indicator functions), and then extended by a limit argument.
Essentially, a random measure is independently scattered when two indicator functions with disjoint supports define independent random variables. 
For a proper definition, see \cite[Section 1]{Rajput1989spectral}.

We show in this section that a L\'evy white noise is an example of   independently scattered random measures. 
We achieve this goal by defining the random variables $\langle \dot{X}, 1_{A} \rangle$ for any Borelian set $A$ whose support has finite Lebesgue measure as limits in probability of random variables $\langle \dot{X}, \varphi\rangle$ with $\varphi \in \D(\R^d)$.

{The set of Borel measurable subsets of $\R^d$ is $\mathcal{B}(\R^d)$. We denote by $\mathcal{B}_b(\R^d)$ the set of bounded Borel measurable sets and by $\mathcal{B}_f(\R^d)$ {the set of Borel measurable sets $A$ with finite Borel measures $\leb d A < \infty$}. We clearly have the strict inclusions $\mathcal{B}_b(\R^d) \subset \mathcal{B}_f(\R^d) \subset \mathcal{B}(\R^d)$. A function $\theta \in \D(\R^d)$ is a mollifier if $\theta \geq 0$ and $\int_{\R^d} \theta(t) \mathrm{d} t = 1$. We then set $\theta_n(t)  = n
^d \theta(nt)$, the sequence $(\theta_n)_{n \geq 1}$ being an approximation of the Dirac impulse $\delta$. 

\begin{proposition} \label{prop:newdefinitionofRV}
Let $\dot{X}$ be a Lévy white noise, $A \in \mathcal{B}_b(\R^d)$, and $B \in \mathcal{B}_f(\R^d)$. Then
\begin{itemize}
    \item Let $\theta \in \D(\R^d)$ be a mollifier. Then, the sequence $(\langle \dot{X},  \theta_n * \mathds{1}_A \rangle)_{n\geq 1}$ converges in probability to a random variable $\langle \dot{X} , \mathds{1}_A \rangle$ that does not depend on the choice of the mollifier $\theta$.
    \item Let $(K_n)_{n\geq 1}$, be a increasing sequence of compact sets such that $\cup_{n\geq 1} K_n = \R^d$. Then, the sequence $(\langle \dot{X},1_{B \cap K_n} \rangle)_{n\geq 1}$ converges in probability to a random variable $\langle \dot{X} , 1_B \rangle$ that does not depend on the choice of $(K_n)_{n\geq1}$.
\end{itemize}
 In addition, the characteristic function of $ \scal{\dot X}{\mathds 1_B}$ is given by 
 \begin{equation} \label{eq:cfwindb}
     \Phi_{\scal{\dot X}{ \mathds 1_B}}(\xi)
= \exp(\leb d B \psi(\xi))
 \end{equation}
for all $\xi\in \R$, where $\psi$ is the characteristic exponent of $\dot X$. 
\end{proposition}

\textit{Remark.} Note that $\theta_n * \mathds{1}_A \in \mathcal{D}(\R^d)$ and $B\cap K_n \in \mathcal{B}_b(\R^d)$, hence the random variables $\langle \dot{X}, \theta_n * \mathds{1}_A \rangle$ and $\langle \dot{X},\mathds{1}_{B \cap K_n} \rangle$ are well-defined, the first one because $\dot{X}$ is a generalized random process in $\D'(\R^d)$, and the second due to the first point of Proposition \ref{prop:newdefinitionofRV}.

\begin{proof}
\textit{The limit of $(\langle \dot{X},  \theta_n * \mathds{1}_A \rangle)_{n\geq 1}$.}
For $n \geq 1$, we set $Y_n = \scal{\dot X}{  \theta_n * \mathds 1_A }$. The goal is to show that the sequence $(Y_n)_{n \geq 1}$ is Cauchy in probability; that is, $\mathscr{P}(|Y_n - Y_m| > \epsilon) \rightarrow 0$ when $n,m \rightarrow \infty$ for any $\epsilon > 0$. The convergence in probability to a constant is equivalent to the convergence in law to the same constant~\cite[p. 27]{billingsley2013convergence}. Applying this fact to $(Y_n - Y_m)_{n,m \geq 1}$, we deduce that it suffices to show that $\log \mathbb{E} [\mathrm{e}^{\mathrm{i} (Y_n - Y_m) \xi} ] \rightarrow 0$ when $n,m \rightarrow \infty$ for any $\xi \in \R$. 
According to \eqref{eq:CFsphi}, we have that 
\begin{equation}
    \log \mathbb{E} \left[\mathrm{e}^{\mathrm{i} (Y_n - Y_m) \xi} \right] 
=
\int_{\R^d} \psi \big( \xi   \left( \lp \theta_n - \theta_m \rp * \mathds 1_A \right) (t)  \big)  \dd t
\end{equation}
where $\psi$ can be decomposed, thanks to \eqref{eq:LK}, as
$$\psi(\xi)= \left( \mathrm{i} \gamma \xi-\frac{\sigma^2 \xi^2} 2  \right) + \int_{|x|\I1}\lp e^{\mathrm{i} \xi x} -1-\mathrm{i} \xi x \rp \nu(\! \dd x) + \int_{|x|>1}\lp e^{\mathrm{i} \xi x} -1 \rp \nu(\! \dd x) = \psi_1(\xi) + \psi_2(\xi) + \psi_3(\xi) .$$
We treat each of the three terms of the characteristic exponent separately. The support of $f*g$ is included in the sum of the supports of $f$ and $g$, $A$ is bounded, and all the $\theta_n$ have their supports included in a common compact set. Hence, there is a compact $K$ such that $\text{supp} \ (\theta_n * A) \subset K$.
Then, 
\begin{equation} \label{eq:firstmorceau}
   \left\lvert \int_{\R^d} \psi_1 \big( \xi   \left( \lp \theta_n - \theta_m \rp * \mathds 1_A \right) (t)  \big)  \dd t \right\rvert
    \leq |\gamma \xi | \lVert  ( \theta_n - \theta_m )* \mathds{1}_A \rVert_{L_1(K)} + \frac{\sigma^2\xi^2}{2} \lVert  ( \theta_n - \theta_m )* \mathds{1}_A \rVert_{L_2(K)}^2.
\end{equation}
It is well known that for $p\s 1$ and $f\in L^p(K)$, $\lp \theta_n-\theta_m\rp*f\to 0$ in $L^p(K)$ as $n,m\to \infty$. Therefore, the right term in \eqref{eq:firstmorceau} goes to $0$ when $n,m \rightarrow \infty$. 
Then, using that $\left | e^{\mathrm{i} \xi x}-1- \mathrm{i} \xi x \right | \I \frac 1 2 |\xi x|^2$ (see for instance \cite[Lemma 5.14]{Kallenberg2006foundations}), we deduce that 
\begin{equation}
    \label{eq:secondmorceau}
      \left\lvert \int_{\R^d} \psi_2 \big( \xi   \left( \lp \theta_n - \theta_m \rp * \mathds 1_A \right) (t)  \big)  \dd t \right\rvert
    \leq
    \frac{\xi^2}{2} \left( \int_{|x|\leq 1}x^2\nu(\dd x) \right) \lVert  ( \theta_n - \theta_m )* \mathds{1}_A \rVert_{L_2(K)}^2,
\end{equation}
which also vanishes when $n,m \rightarrow \infty$. 
The last term represents the compound Poisson part of the L\'evy-It\^o decomposition of the L\'evy white noise. It corresponds to the characteristic function of the random variable $M_{n,m}:=\int_{\R^d} \int_\R x  ( (\theta_n-\theta_m)*\mathds 1_A ) (t) J(\! \dd t, \dd x)$ where $J$ is a Poisson random measure on $\R^d\times \R$ with intensity measure $\! \dd t \mathds 1_{|x|>1} \nu(\! \dd x)$, and we know that 
$$M_{n,m}= \sum_{i\s 1} Z_i    ( \lp \theta_n - \theta_m \rp * \mathds 1_A ) (T_i)\, ,$$
for some random space-time points $(Z_i, T_i)_{i\s 1}$, and the sum above has finitely many terms (independently of $m,n$) almost surely due to the existence of the compact $K$ introduced above. 
Indeed, we have
$$\expect{ J\lp K\times \R \rp } = \int_{K\times \R } \dd t \mathds 1_{|x|>1}\nu(\! \dd x)=\leb d K \int_{|x|>1} \nu(\! \dd x)<\infty\, ,$$
 and $J\lp K\times \R \rp$ is the random variable  that counts the number of points $T_i$ that fall in $K$. By Lebesgue's differentiation theorem (see \cite[Chapter 7, Exercise 2]{zygmund}), $\lp\theta_n - \theta_m \rp * \mathds 1_A(t) \to 0$ as $n,m\to \infty$ for all $t\in K\backslash H$, where $H$ is a subset of $\R^d$ such that $\leb d H=0$. The random times $T_i$ have an absolutely continuous law. Indeed, for any Borel set $B\subset \R^d$,  
$$ \PR(T_i \in B ) \I \PR(J(B \times \R) \s 1)\I \leb d B \int_{|z| >1} \nu(\! \dd z)\, .$$ 
Therefore,  for all $i\s 1$, $\PR(T_i\in H)=0$ and  $M_{n,m} \to 0$ as $n,m \to \infty$ almost surely, hence also in law, which implies that $\int_{\R^d} \psi_3 \big( \xi   \left( \lp \theta_n - \theta_m \rp * \mathds 1_A \right) (t)  \big)  \dd t \rightarrow 0$ when $n,m \rightarrow 0$. Finally, we have shown that $\log \mathbb{E} \left[\mathrm{e}^{\mathrm{i} (Y_n - Y_m) \xi} \right] \rightarrow 0$ for any $\xi$ when $n,m \rightarrow \infty$ and $(Y_n)_{n\geq1}$ converges in probability to some random variable $Y = \scal{\dot{X}}{\theta_n * \mathds{1}_A}$. 
By observing the convergence of each term of the decomposition of the characteristic exponent, it is moreover easy to see that for all $\xi \in \R$,
\begin{equation} \label{eq:Ineedthis}
    \int_{\R^d} \psi \lp \xi ( \theta_n  * \mathds 1_A ) (t)\rp  \dd t \to \int_{\R^d} \psi \lp \xi \mathds{1}_A (t)\rp  \dd t = \leb d A \psi(\xi) \, , \qquad \text{as} \ n\to \infty\, ,
\end{equation}
hence $\expect{ e^{\mathrm{i} \xi Y}} =\exp \lp \leb d A \psi(\xi)  \rp$.
If $\tilde \theta$ is another mollifier, and $(\tilde Y_n)_{n\geq 1}$ and $\tilde Y$ are  the associated sequence and limit, it is easy to see by linearity of $\dot X$ that $Y_n -\tilde Y_n \to 0$ in probability as $n\to \infty$, hence the limit $Y = \tilde{Y}$ does not depend on the choice of $\theta$. \\

\textit{The limit of $(\langle \dot{X},\mathds{1}_{B \cap K_n} \rangle)_{n\geq 1}$.}
Applying the same principle as above, it is easy to show that $  (\langle \dot{X},\mathds{1}_{B \cap K_n} \rangle)_{n\geq 1}$ is Cauchy in probability, and therefore admits a limit that we denote by $\scal{\dot{X}}{\mathds{1}_B}$. Again, one show that the limit does not depend on the choice of the compact sets $K_n$ by showing that $Z_n - \tilde{Z}_n \rightarrow 0$ in probability as $n \rightarrow \infty$, where $\tilde{Z}_n$ is associated to a second sequence of increasing compact sets $\tilde{K}_n$. Finally, as we did for $A \in \mathcal{B}_f(\R^d)$ in \eqref{eq:Ineedthis}, one proves that the characteristic function of $\scal{\dot{X}}{\mathds{1}_B}$ has the expected form for $B \in \mathcal{B}_f(\R^d)$ by a limiting argument.
\end{proof}}

{The set $\mathcal{B}_f(\R^d)$ of Borelian sets of finite Lebesgue measure is a $\delta$-ring\footnote{A $\delta$-ring is a collection of sets that is closed under finite union, countable intersection, and relative complementation \cite[Definition 1.2.13]{Bogachev2007measure}. It appears in measure theory, especially when one wants to avoid sets with infinite measure.} Since $\bigcup_{n\in \N} [-n,n]^d=\R^d$, condition $(1.4)$ of \cite{Rajput1989spectral} is satisfied.} By Proposition~\ref{prop:newdefinitionofRV}, we have defined the random set function as an extension of the L\'evy white noise $\dot X$ on $\mathcal{B}_f(\R^d)$. We still refer to this set function as a L\'evy white noise.

\begin{theorem} \label{theo:connection}
 The extension of the L\'evy white noise $\dot X$ is an independently scattered random measure in the sense of \cite{Rajput1989spectral}.
\end{theorem}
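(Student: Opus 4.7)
The plan is to verify the two defining axioms of an independently scattered random measure from \cite[Section 1]{Rajput1989spectral}: mutual independence of the family $\scal{\dot X}{\mathds 1_{A_k}}$ for any finite collection of pairwise disjoint sets $A_k \in \m L(\R^d)$, and $\sigma$-additivity whenever the countable disjoint union lies in $\m L(\R^d)$.

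The main obstacle is the first item, because Proposition \ref{extension} as stated only delivers pairwise independence. My plan is to upgrade to mutual independence by computing the joint characteristic function in one shot rather than iterating pairwise statements. Given disjoint $A_1, \ldots, A_N \in \m L(\R^d)$ and real numbers $\xi_1, \ldots, \xi_N$, I would first handle the case where all $A_k$ are bounded: choose a single $\varphi \in \m D(\R^d)$ that equals $\xi_k$ on a neighbourhood of $A_k$ for every $k$, and run the mollifier construction of Proposition \ref{wellposed} for the combined indicator $\mathds 1_{A_1 \cup \cdots \cup A_N}$. The four-term decomposition of the L\'evy exponent $\psi$ (drift, Gaussian, small jumps, compound Poisson) is controlled exactly as in the proof of Proposition \ref{wellposed}, so passing to the limit yields
\begin{equation*}
\E \left[ \exp \left( \ui \sum_{k=1}^N \xi_k \scal{\dot X}{\mathds 1_{A_k}} \right) \right] = \exp \left( \sum_{k=1}^N \leb d {A_k}\, \psi(\xi_k) \right) = \prod_{k=1}^N \E \left[ \ue^{\ui \xi_k \scal{\dot X}{\mathds 1_{A_k}}} \right].
\end{equation*}
The unbounded case is then reduced to the bounded one by truncating each $A_k$ to $A_k \cap [-n,n]^d$ and invoking Definition \ref{borelset2}. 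The product form of the joint characteristic function proves mutual independence, and this extends automatically to countable families since independence of a sequence is a finite-dimensional property.

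For $\sigma$-additivity, let $(A_k)_{k \geq 1} \subset \m L(\R^d)$ be pairwise disjoint with $A := \bigcup_k A_k \in \m L(\R^d)$ and set $C_n := \bigcup_{k=1}^n A_k$. Iterating the finite-additivity statement of Proposition \ref{extension} yields both $\scal{\dot X}{\mathds 1_{C_n}} = \sum_{k=1}^n \scal{\dot X}{\mathds 1_{A_k}}$ almost surely and the identity $\scal{\dot X}{\mathds 1_A} - \scal{\dot X}{\mathds 1_{C_n}} = \scal{\dot X}{\mathds 1_{A \setminus C_n}}$ almost surely. By \eqref{chifunction2} the characteristic function of the tail equals $\exp(\leb d {A \setminus C_n}\, \psi(\xi))$, which converges pointwise to $1$ since $\leb d {A \setminus C_n} \to 0$ by countable additivity of Lebesgue measure. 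Hence $\scal{\dot X}{\mathds 1_{A \setminus C_n}} \to 0$ in distribution, and because the limit is deterministic this convergence holds in probability as well. Combined with the mutual independence established above, L\'evy's equivalence theorem for sums of independent random variables upgrades this to almost-sure convergence of $\sum_{k=1}^n \scal{\dot X}{\mathds 1_{A_k}}$ to $\scal{\dot X}{\mathds 1_A}$, which is exactly the $\sigma$-additivity axiom required by \cite{Rajput1989spectral}.
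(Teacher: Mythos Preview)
Your overall strategy matches the paper's: establish mutual independence via the joint characteristic function, then prove $\sigma$-additivity by showing the partial sums converge in probability and upgrading to almost-sure convergence via L\'evy's equivalence theorem (the paper cites \cite[Theorem 5.3.4]{chung} for this last step). Your $\sigma$-additivity argument is correct and essentially identical to the paper's.

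There is, however, a technical gap in your independence step. You propose to choose a single $\varphi \in \m D(\R^d)$ equal to $\xi_k$ on a neighbourhood of each $A_k$, but such a $\varphi$ need not exist: disjoint bounded Borel sets can be topologically intertwined (take $A_1 = [0,1]^d \cap \mathbb{Q}^d$ and $A_2 = [0,1]^d \setminus \mathbb{Q}^d$), and no continuous function can take two distinct constant values on neighbourhoods of both. The fix is easy and in the spirit of what you wrote: pick $\varphi \in \m D(\R^d)$ with $\varphi \equiv 1$ on a compact set containing all the $A_k$, and use linearity of $\dot X$ on $\m D(\R^d)$ to write
\[
\sum_{k=1}^N \xi_k\, \scal{\dot X}{\varphi\,(\theta_n * \mathds 1_{A_k})} \;=\; \Big\langle \dot X,\ \varphi \sum_{k=1}^N \xi_k\,(\theta_n * \mathds 1_{A_k}) \Big\rangle.
\]
The characteristic function of the right-hand side is $\exp\bigl(\int_{\R^d} \psi\bigl(\varphi(t)\sum_k \xi_k (\theta_n * \mathds 1_{A_k})(t)\bigr)\,\drm t\bigr)$; since the $A_k$ are disjoint, $\varphi\sum_k \xi_k (\theta_n * \mathds 1_{A_k}) \to \sum_k \xi_k \mathds 1_{A_k}$ a.e.\ and the integrand tends a.e.\ to $\sum_k \psi(\xi_k)\mathds 1_{A_k}$. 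The four-term decomposition of $\psi$ used in Proposition~\ref{wellposed} then justifies passing to the limit, yielding the factorised joint characteristic function you want. The paper compresses all of this into one line (``by linearity of the noise, this fact is an immediate consequence of \eqref{chifunction2} and the $\sigma$-additivity of Lebesgue measure''); your instinct to spell it out is good, only the specific construction of $\varphi$ needs this adjustment.
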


\begin{proof}
{Let $A , B \in \mathcal{B}_f(\R^d)$ be two disjoint sets, so that $\mathds{1}_A + \mathds{1}_B = \mathds{1}_{A\cup B}$. Then, the generalized random process $\dot{X}$ satisfies, any $n \geq 1$, $\scal{\dot{X}}{\theta_n * \mathds{1}_{A\cup B}} = \scal{\dot{X}}{\theta_n * \mathds{1}_{A}} + \scal{\dot{X}}{\theta_n * \mathds{1}_{B}}$,  where $\theta =n^d \theta(n \cdot)$ for some mollifier $\theta$. Due to the convergence in the first part of Proposition \ref{prop:newdefinitionofRV}, we deduce that $\scal{\dot{X}}{\mathds{1}_{A\cup B}} = \scal{\dot{X}}{\mathds{1}_{A}} + \scal{\dot{X}}{\mathds{1}_{B}}$.  The same conclusions hold more generally when summing finitely many random variables coming from disjoint sets in $\mathcal{B}_f(\R^d)$.}

For the definition of a scattered random measure, we refer to~\cite[p.455]{Rajput1989spectral}. Let $(A_n)_{n\in \N}$ be a sequence of disjoint sets in $\mathcal{B}_f(\R^d)$. Let $k\in \N$ and $i_1<\dots < i_k \in \N$. We show that the random variables $\scal{\dot X}{ \mathds 1_{A_{i_j}}}$,  $1\I j \I k$ are independent. By linearity of the noise, this fact is an immediate consequence of \eqref{eq:cfwindb} and the $\sigma$-additivity of Lebesgue measure. This proves that $\scal{\dot X}{ \mathds 1_{A_{n}}}$,  $n\in \N$ is a sequence of independent random variables. If in addition $\sum_{n\in \N} \leb d {A_n} <\infty$,  then $\bigcup_{n\in \N} A_n \in \mathcal{B}_f(\R^d)$ and we need to show that 
$$\scal{\dot X}{ \mathds 1_{\bigcup_{n\in \N} A_{n}}}= \sum_{n\in \N} \scal{\dot X}{ \mathds 1_{A_{n}}}\, ,$$
 where the series converges almost surely. As we have seen, we have for any $k\in \N$, 
\begin{equation}\label{sum}
 \scal{\dot X}{ \mathds 1_{\bigcup_{n=1}^k A_{n}}}= \sum_{n=1}^k \scal{\dot X}{ \mathds 1_{A_{n}}}.
\end{equation}
From the expression of the characteristic function of the left-hand side of \eqref{sum} and by linearity, we see that $\scal{\dot X}{ \mathds 1_{\bigcup_{n=1}^k A_{n}}} \to \scal{\dot X}{ \mathds 1_{\bigcup_{n\in \N} A_{n}}}$ in probability as $k\to \infty$. Therefore the right-hand side of \eqref{sum} is a sum of independent random variables that converges in probability. By \cite[Theorem 5.3.4]{chung}, the sum converges almost surely, which concludes the proof.
\end{proof}

	\subsection{Extension of the Domain of Definition of the Lévy White Noise} \label{subsec:extension}
Having connected L\'evy white noises in $\m D'(\R^d)$ with independently scattered random measures, it is then possible to construct a stochastic integral of non-random functions. This is done in \cite{Rajput1989spectral} and we simply restate the main definitions and theorems for the convenience of the reader. For any simple function $f=\sum_{i=1}^n a_i \mathds 1_{A_i}$ where $\leb d {A_i}<\infty$ for any $1\I i\I n$, we can define for any Borel set $A$,
\begin{equation*}
 \scal{\dot X}{f\mathds 1_A}:=\sum_{i=1}^n a_i \scal{\dot X}{\mathds 1_{A_i\cap A}}\, .
\end{equation*}
\begin{definition}\label{defrosinski}
 We say that a Borel-measurable function $f: \R^d \to \R$ is \emph{$\dot X$-integrable} if there exists a sequence of simple function $(f_n)_{n\in \N}$ such that $f_n\to f$ almost everywhere for the Lebesgue measure as $n\to \infty$, and for any Borel set $A$, the sequence $\lp \scal{\dot X}{f_n \mathds 1_A}\rp_{n\in \N}$ converges in probability as $n\to\infty$. In this case we define for any Borel set $A$,
\begin{equation*}
 \scal{\dot X}{f \mathds 1_A}:=\lim_{n\to\infty} \scal{\dot X}{f_n \mathds 1_A}\, .
\end{equation*}
{We denote by $L^0(\dot X)$ the set of all $\dot X$-integrable functions. For $p > 0$, we also define the space of $\dot X$-integrable functions that have a finite $p$th moment; that is,}
	\begin{equation*}
	{L^p (\dot{X}) = \{ f \in L^0(\dot{X}), \ \mathbb{E}[\lvert \langle \dot{X}, f \rangle \rvert^p] < \infty \} \subset L^0(\dot{X}).}
	\end{equation*}
\end{definition}

Definition \ref{defrosinski} identifies the class of measurable test functions $f$ such that $\langle \dot{X}, f \rangle$ is well-defined {and the class of measurable test functions $f$ such that $\mathbb{E}[\lvert \langle \dot{X}, f \rangle \rvert^p] < \infty$.
For $p \geq 0$, we define the \emph{$p$th-order Rajput--Rosinski exponent}
\begin{align} \label{eq:Psi}
\Psi_p (\xi) =   \Big\lvert \gamma \xi + \int_{\R} x \xi \left( \One_{\abs{x \xi}\leq 1} - \One_{\abs{x}\leq 1} \right) \nu(\drm x) \Big\rvert  
				+ \sigma^2 \xi^2
				+ \int_{|x \xi|\leq 1}|x|^p |\xi|^p  \nu(\drm x)
				+ \int_{|x \xi|> 1} x^2\xi^2  \nu(\drm x).
\end{align}
For $p = 0$, we may use the notation $\Psi_p = \Psi$, that we simply call the \emph{Rajput--Rosinski exponent} of $\dot{X}$. The Rajput--Rosinski exponent allows us to reformulate the main results of \cite{Rajput1989spectral} in Proposition~\ref{integrable}.

\begin{proposition}\label{integrable}
 Let $ 0 \leq p < \infty$ and $\dot X$ be a L\'evy white noise with 
$p$th order Rajput--Rosinski exponent $\Psi_p$. 
Then $f \in L^p(\dot X)$ if and only if $\Psi(f) := \int_{\R^d} \Psi(f(t)) \drm t < \infty$. 
\end{proposition}

The case $p=0$ is covered in~\cite[Theorem 2.7]{Rajput1989spectral}, while the case $p>0$ is a direct consequence of \cite[Theorem 3.3]{Rajput1989spectral}. 
It turns out that the spaces $L^p(\dot{X})$ have a rich structure of generalized Orlicz spaces.}
The definition and first properties of those spaces are recalled in Appendix \ref{app:orlicz}.
We refer to \cite{Rao1991theory} for an in-depth exposition, with a special emphasis on the Chapter X. 

{\begin{proposition} \label{prop:LdotXOrlicz}
	Let $ 0 \leq p < \infty$. The $p$th-order Rajput--Rosinski exponent $\Psi_p$ of a L\'evy white noise $\dot{X}$ is a $\Delta_2$-regular $\varphi$-function in the sense of Definition \ref{def:phifun} (see Appendix \ref{app:orlicz}).
	Therefore, $L^p(\dot{X}) = L^{\Psi_p}(\R^d)$ is a generalized Orlicz space. It is therefore a complete linear metric space equipped with the F-norm
	\begin{equation*}
		\lVert f \rVert_{\Psi_p} := \inf \{ \lambda > 0, \Psi_p( f / \lambda ) \leq \lambda \}.
	\end{equation*}
	The space $\D(\R^d)$ is dense in $L^p(\dot{X})$ and the convergence of a sequence of functions $f_k$ in $L^p(\dot{X})$ to $0$ is equivalent to $\Psi_p(f_n) \rightarrow 0$ as $n \rightarrow \infty$. 
\end{proposition}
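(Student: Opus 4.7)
The plan is to verify that $\Psi$ meets the two structural requirements of Definition \ref{def:phifun}---being a $\varphi$-function and being $\Delta_2$-regular---and then let the Musielak--Orlicz theory recalled in the appendix, combined with Proposition \ref{integrable}, deliver the rest of the statement.

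\textbf{$\varphi$-function properties.} Splitting $\Psi = \Psi_1 + \Psi_2 + \Psi_3$ into drift-modulus, Gaussian, and jump pieces, I would verify non-negativity, evenness, vanishing at $0$, and continuity of each in turn. For $\Psi_2(\xi)=\sigma^2\xi^2$ this is trivial, and for $\Psi_3(\xi)=\int_\R (1\wedge (x\xi)^2)\nu(\drm x)$ continuity follows by dominated convergence using the L\'evy bound $\int (1\wedge x^2)\nu(\drm x) < \infty$. For $\Psi_1$ I would rewrite the integrand as $x\xi(\One_{|x\xi|\le 1 < |x|} - \One_{|x|\le 1 < |x\xi|})$, supported on regions that shrink continuously with $\xi$, and again apply dominated convergence; at the (at most countably many) $\xi$ for which $\nu$ has an atom at $\pm 1/|\xi|$ continuity from both sides is checked by the same uniform bound.

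\textbf{$\Delta_2$-regularity.} This is the main obstacle. The easy estimates $\Psi_2(2\xi) = 4\Psi_2(\xi)$ and $\Psi_3(2\xi) \le 4\Psi_3(\xi)$ (from $1\wedge 4u \le 4(1\wedge u)$) are immediate, but $\Psi_1$ is delicate because its truncation indicator depends on $\xi$. I would decompose
\begin{equation*}
\gamma(2\xi) + \!\int\! x(2\xi)(\One_{|2x\xi|\le 1} - \One_{|x|\le 1})\nu(\drm x) = 2\Big(\gamma\xi + \!\int\! x\xi(\One_{|x\xi|\le 1} - \One_{|x|\le 1})\nu(\drm x)\Big) + 2\!\int\! x\xi(\One_{|2x\xi|\le 1} - \One_{|x\xi|\le 1})\nu(\drm x).
\end{equation*}
The correction integral is supported on the annulus $\{1/(2|\xi|) < |x| \le 1/|\xi|\}$, on which $1/2 < |x\xi| \le 1$, so $|x\xi| \le 2(x\xi)^2 = 2\bigl(1 \wedge (x\xi)^2\bigr)$. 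The triangle inequality then yields $\Psi_1(2\xi) \le 2\Psi_1(\xi) + 4\Psi_3(\xi)$, and summing the three estimates gives $\Psi(2\xi) \le C\Psi(\xi)$ for a universal constant~$C$.

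\textbf{Identification and wrap-up.} Reading Proposition \ref{integrable} term by term, conditions (i), (ii), (iii) are respectively $\int_{\R^d}\Psi_1(f) < \infty$, $\int_{\R^d}\Psi_2(f) < \infty$, and (by Tonelli on the non-negative integrand) $\int_{\R^d}\Psi_3(f) < \infty$. Hence $f\in L(\dot X)$ iff $\int \Psi(f) < \infty$, which is the definition of $f \in L^\Psi(\R^d)$. Completeness of $L^\Psi$ under the F-norm, its explicit formula, and the equivalence of F-norm convergence with modular convergence $\Psi(f_n)\to 0$ are standard consequences of $\Psi$ being a $\Delta_2$-regular $\varphi$-function (see \cite{Rao1991theory} and the appendix). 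Density of $\D(\R^d)$ follows in two classical steps: first approximate an arbitrary $f\in L^\Psi$ by truncated simple functions using the dominated-convergence property of the modular (valid thanks to $\Delta_2$), then mollify to obtain a smooth compactly supported approximant.
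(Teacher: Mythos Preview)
Your proof is considerably more explicit than the paper's, which simply invokes \cite[Lemma 3.1]{Rajput1989spectral} for the $\varphi$-function property and then appeals to Proposition~\ref{prop:orlicz}. Your $\Delta_2$ argument is correct and elegant: the annulus trick bounding the correction term by $4\Psi_3(\xi)$ is exactly the right idea, and the resulting estimate $\Psi(2\xi)\le 8\Psi(\xi)$ is clean. The identification $L(\dot X)=L^\Psi(\R^d)$ via Proposition~\ref{integrable} and the density-by-mollification step are also fine and match what the paper does.

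There is, however, a genuine gap in your verification of the $\varphi$-function axioms. Definition~\ref{def:phifun} requires $\Psi$ to be \emph{nondecreasing} on $\R^+$, which you do not check; and your claim that $\Psi_1$ is continuous at points where $\nu$ has an atom at $\pm 1/|\xi|$ is not correct as stated. Take $\gamma=\sigma=0$ and $\nu=\delta_2$: then for $0<\xi\le 1/2$ one has $\Psi_1(\xi)=2\xi$ and $\Psi_3(\xi)=4\xi^2$, so $\Psi(1/2)=2$, while for $\xi>1/2$ one has $\Psi_1(\xi)=0$ and $\Psi_3(\xi)=1$, so $\Psi(\xi)=1$. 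Thus $\Psi$ is neither continuous nor nondecreasing at $\xi=1/2$. The point is that the drift-modulus piece $\Psi_1$ can genuinely jump when $\nu$ is atomic, and this is not compensated by the other pieces. This is precisely the delicate issue the paper avoids by deferring to Rajput--Rosinski, whose Lemma~3.1 handles it (essentially by showing $\Psi$ is equivalent, in the sense relevant for the Musielak--Orlicz theory, to a genuine $\varphi$-function, so that the conclusions of Proposition~\ref{prop:orlicz} still apply). If you want a self-contained argument, you would need to either pass to the monotone envelope $\widetilde\Psi(\xi)=\sup_{0\le\eta\le|\xi|}\Psi(\eta)$ and show it defines the same Orlicz space, or work directly with the more permissive framework in \cite[Chapter~X]{Rao1991theory}.
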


\begin{proof}
Rajput and Rosinski have shown that $\Psi_p$ is a $\varphi$-function in  \cite[Lemma 3.1]{Rajput1989spectral}.
By definition, $L^p(\dot{X}) = L^{\Psi_p}(\R^d)$ is therefore a generalized Orlicz space in the sense of Definition \ref{def:orlicz}. 
Except for the density, the  properties of the space $L^p(\dot{X})$ are then derived from Proposition \ref{prop:orlicz}.
For the density, Proposition \ref{prop:orlicz} implies that simple functions are dense in $L^p(\dot{X})$. It suffices therefore to remark that a simple function can be easily approximated by a function in $\D(\R^d)$ in the topology of $L^p(\dot{X})$ by taking a regularized version of the simple function.
\end{proof}
 
 The connection between L\'evy white noises in $\D'(\R^d)$ and independently scattered random measures established in Theorem \ref{theo:connection} allows us to apply \cite[Theorem 3.3]{Rajput1989spectral} to the L\'evy white noise and leads to the following result. 
 
 \begin{theorem}  \label{prop:continuity}
Let $\dot{X}$  be a L\'evy white noise {and $0\leq p < \infty$}. Then, the functional 
\begin{align*}
	\dot{X} :     L^p(\dot{X}) &\rightarrow L^p (\Omega) \\
			 f &\mapsto \langle \dot{X}, f  \rangle 
\end{align*}
is linear and continuous. In other words, $\dot{X}$ is a random linear functional on $L^0(\dot{X})$ and an $L^p(\Omega)$-valued random linear functional on $L^p(\dot{X})$ when $p > 0$. 
\end{theorem}}

Theorem \ref{prop:continuity} gives general conditions  on test functions for integrability with respect to a given L\'evy white noise. It therefore specifies the domain of definition of $\dot{X}$; that is, the broadest class of test functions on which $\dot X$ is a random linear functional. 
{Moreover, the space $L^p(\dot{X})$ is the largest space of test functions such that $\langle \dot{X} , f \rangle$ is well-defined and has a finite $p$th-moment.}
Once the random variable $\langle \dot{X}, f \rangle$ is well-defined, it is important to identify its characteristic function, as was done by Rajput and Rosinski in~\cite[Theorem 2.7]{Rajput1989spectral}. More generally, one uses the previous results to extend the domain of continuity and positive-definiteness of the characteristic functional of a L\'evy white noise. 

	\begin{proposition} \label{prop:continuityCF}
	For any L\'evy white noise $\dot{X}$, the characteristic functional $\CF_{\dot{X}}$ is well-defined, continuous, and positive-definite over $L^0(\dot{X})$, and is given by 
	\begin{equation} \label{eq:CFgeneral}
		\CF_{\dot{X}}(f) = \exp\left( \int_{\R^d} \psi(  f (t)) \drm t \right).
	\end{equation}
	\end{proposition}
	
	\begin{proof}
	The characteristic functional $\varphi \mapsto \CF_{\dot{X}}(\varphi) = \mathbb{E}[\mathrm{e}^{\mathrm{i} \langle \dot{X},\varphi\rangle} ]$ is  continuous over $\D(\R^d)$.
	For any $f \in L^0(\dot{X})$, we know that $\langle \dot{X},f \rangle$ is a well-defined random variable. {According to the last part of~\cite[Theorem 2.7]{Rajput1989spectral},} its characteristic function is $\xi \mapsto  \mathbb{E}[\mathrm{e}^{\mathrm{i} \xi \langle \dot{X},f\rangle}] = \exp\left( \int_{\R^d} \psi( \xi f(t)) \drm t \right)$.  We can therefore extend $\CF_{\dot{X}}$ to $L^0(\dot{X})$ by setting 
	\begin{equation*}
		\CF_{\dot{X}} (f) =  \mathbb{E}[\mathrm{e}^{\mathrm{i}   \langle \dot{X},f\rangle}] =\exp\left( \int_{\R^d} \psi(   f(t)) \drm t \right).
	\end{equation*}
	
	\textit{Positive-definiteness.} Let $N \geq 1$, $a_n \in \C$, $f_n \in L^0(\dot{X})$, $n=1,\ldots, N$. 
	The space $\D(\R^d)$ is dense in $L^0(\dot{X})$ (Proposition \ref{prop:LdotXOrlicz}), hence there exists for $N$ sequences $(\varphi_k^n)_{k \in \N}$ such that $\varphi_k^n \rightarrow f_n$ in $L^0(\dot{X})$ for $n= 1, \cdot , N$. 
	From Theorem \ref{prop:continuity}, we know that $f \mapsto \langle \dot{X}, f \rangle$ is continuous from $L^0(\dot{X})$ to $L^0 (\Omega)$. In particular, we have 
	$\CF_{\dot{X}} (\varphi^i_k -\varphi^j_k) = \mathbb{E}[\mathrm{e}^{\mathrm{i} \langle \dot{X}, \varphi_k^i -  \varphi_k^j \rangle}] {\longrightarrow} \mathbb{E}[\mathrm{e}^{\mathrm{i} \langle \dot{X}, f_i - f_j \rangle}] = \CF_{\dot{X}} (f_i - f_j) $ for every $1\leq i,j \leq N$ when $k \rightarrow \infty$.
	Finally, we have that
	\begin{align*}
		\sum_{1\leq i,j\leq N} a_i \bar{a}_j \CF_{\dot{X}} (f_i - f_j) 
		 = 
			\lim_{k\rightarrow \infty}  \sum_{1\leq i,j\leq N} a_i \bar{a}_j \CF_{\dot{X}} (\varphi^i_k -\varphi^j_k)   \geq 0
	\end{align*}
	since $\CF_{\dot{X}}$ is positive-definite over $\D(\R^d)$.

	\textit{Continuity.} Using the L\'evy--Khintchine representation \eqref{eq:LK} of $\psi$ with L\'evy triplet $(\gamma, \sigma^2, \nu)$, we have
	\begin{align} \label{eq:boundpsiPsi}
		\lvert \psi (\xi) \rvert &= \Big\lvert   \mathrm{i} \gamma \xi  + \mathrm{i} \int_{\R}  x \xi \left( \One_{\abs{x \xi }\leq 1}- \One_{\abs{x}\leq 1} \right) \nu(\drm x) + \sigma^2 \xi^2 + \int_{\R} (\mathrm{e}^{\mathrm{i} x \xi} - 1 - \mathrm{i} x \xi \One_{\abs{ x \xi } \leq 1} ) \nu (\drm x) \Big\rvert \nonumber \\
		&\leq \Big\lvert \gamma \xi + \int_{\R} x \xi \left( \One_{\abs{x \xi }\leq 1}- \One_{\abs{x}\leq 1} \right) \nu(\drm x) \Big\rvert  
				+ \sigma^2 \xi^2
				+ 2 \int_{\R} ( 1\wedge (x^2\xi^2) ) \nu(\drm x)  \leq 2 \Psi(\xi),
	\end{align}
	where we used the triangular inequality and the relation $\lvert \mathrm{e}^{\mathrm{i} y} -1 - \mathrm{i} y \One_{\abs{y} \leq 1} \rvert \leq 2(1 \wedge y^2)$ applied to $y= x \xi$. Applying \eqref{eq:boundpsiPsi} to $\xi = f(t)$ and integrating over $\R^d$, we have for every $f \in L^0(\dot{X})$,
	\begin{equation*}
		\lvert \log \CF_{\dot{X}}(f) \rvert \leq \int_{\R^d} \lvert \psi ( f(t) )\rvert \drm t \leq 2 \rVert f \lVert_{\Psi}.
	\end{equation*}
	Hence $\CF_{\dot{X}}$ is continuous at $0$. The functional $\CF_{\dot{X}}$ is positive-definite and continuous at $0$, and therefore continuous \cite[Section IV.1.2, Proposition 1.1]{ProbaBanach1987}. 
	\end{proof}

\section{Practical Determination of the Domain of Definition} \label{sec:criteria}

We provide here several criteria for the practical identification of the domain of definition of a L\'evy white noise. 
We apply our result to the Gaussian, S$\alpha$S, compound Poisson, and generalized Laplace noises.
The results presented here are new for the two latter classes of noise to the best of our knowledge.
Similar considerations are given for the domain of finite $p$th moments for $0< p \leq 2$. 
In the rest of the paper, we shall consider the spaces $L^p(\dot{X})$ for $0 \leq p \leq 2$, with the convention that $L^0(\dot{X}) = L^0(\dot{X})$ is the domain of definition of the L\'evy white noise $\dot{X}$. 

	\subsection{Basic Properties}

\begin{proposition} \label{prop:affineinvariance}
	Let $\dot{X}$ be a L\'evy white noise  and $p \geq 0$.
	\begin{itemize}
		\item \emph{Linearity:} for $f,g \in L^p(\dot{X})$ and $\lambda \in \R$, $f + \lambda g \in L^p(\dot{X})$.
		\item \emph{Invariances:} for $f \in L^p(\dot{X})$ and $H : \R^d \rightarrow \R^d$, a $C_1$-diffeomorphism, {we have $t \mapsto f(H (t)) \in L^p(\dot{X}).$}
		In particular,   the translations $f(\cdot - t_0)$, rescalings $f(b \cdot)$, and rotations $f(\mathrm{R} \cdot)$ of $f$, with $t_0 \in \R^d$, $b \neq 0$, and $\mathrm{R} \in \mathrm{SO}(d)$ an rotation matrix, are in $L^p(\dot{X})$.
	\end{itemize}
\end{proposition}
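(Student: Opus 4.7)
The plan is to reduce both assertions to the modular characterization of $L^p(\dot{X})$ obtained by applying Proposition \ref{prop:LdotXOrlicz} to the $p$-th order Rajput-Rosinski exponent $\Psi_p$: a Borel function $f$ belongs to $L^p(\dot{X})$ if and only if
\[
\Psi_p(f) \;:=\; \int_{\R^d} \Psi_p(f(t))\,\drm t \;<\; \infty,
\]
where $\Psi_p$ is a $\Delta_2$-regular $\varphi$-function (in particular, non-decreasing in $|\xi|$, even, continuous, and satisfying $\Psi_p(2\xi) \leq K \Psi_p(\xi)$ for some $K>0$).

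For linearity, I would simply invoke the fact that, by construction, any generalized Orlicz space is a linear space; so Proposition \ref{prop:LdotXOrlicz} applied to $\Psi_p$ in place of $\Psi$ gives the stability of $L^p(\dot{X})$ under finite linear combinations at once. For a self-contained argument, I would combine $\Delta_2$-regularity with the monotonicity of $\Psi_p$ in $|\xi|$ to obtain a pointwise bound of the form $\Psi_p(u + \lambda v) \leq C_\lambda(\Psi_p(u) + \Psi_p(v))$, and then integrate over $\R^d$ to conclude that $f + \lambda g \in L^p(\dot{X})$ whenever $f, g \in L^p(\dot{X})$.

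For the invariance under a $C_1$-diffeomorphism $H : \R^d \rightarrow \R^d$, the natural move is the change of variables $u = H(t)$ in the modular:
\[
\int_{\R^d} \Psi_p(f(H(t)))\,\drm t \;=\; \int_{\R^d} \Psi_p(f(u))\,\lvert \det J_{H^{-1}}(u) \rvert\,\drm u.
\]
In the three explicitly stated particular cases, $|\det J_{H^{-1}}|$ is a constant: equal to $1$ for translations $H(t) = t - t_0$ and for rotations $H(t) = \mathrm{R}t$ with $\mathrm{R} \in \mathrm{SO}(d)$, and equal to $|b|^{-d}$ for the rescaling $H(t) = b t$ with $b \neq 0$. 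In each case the right-hand side is a constant multiple of the finite modular $\Psi_p(f)$, so $f \circ H \in L^p(\dot{X})$.

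The main subtlety is that the fully general $C_1$-diffeomorphism claim really requires an additional hypothesis: the change-of-variables formula yields a finite modular only when $|\det J_{H^{-1}}|$ is essentially bounded (or at least bounded on a set containing the essential support of $\Psi_p\circ f$), which is not automatic for an arbitrary $C_1$-diffeomorphism. This is harmless for the applications given immediately afterwards, since the affine maps (translations, rescalings, rotations) all have constant Jacobian, but in a formally complete write-up I would state this boundedness assumption explicitly.
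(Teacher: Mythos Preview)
Your proposal is correct and follows essentially the same route as the paper: linearity via the Orlicz-space structure from Proposition~\ref{prop:LdotXOrlicz}, and invariance via the change of variables $u = H(t)$ in the modular $\int_{\R^d}\Psi_p(f(H(t)))\,\drm t$. Your caveat about the Jacobian is well taken: the paper's own proof simply asserts that ``by assumption on $H$'' the factor $|\det J_{H^{-1}}|$ is bounded above and below by positive constants, which is not guaranteed by the bare hypothesis that $H$ is a $C^1$-diffeomorphism of $\R^d$; you are right that an additional boundedness assumption is implicitly being used, and that the affine examples listed are unaffected since their Jacobians are constant.
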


\begin{proof}
	The linearity is already known since $L^p(\dot{X})$ is a vector space by Proposition \ref{prop:LdotXOrlicz}.
	For the invariance, we simply remark that, by the substitution $u =H(t)$, $$\int_{\R^d} \Psi_p( f( H(t) )) \drm t = \int_{\R^d}   \lvert \det  J_{H^{-1}}(u) \rvert  \Psi_p (f(u))   \drm u$$ with $J_H$ the invertible Jacobian matrix of $H$. By assumption on $H$, $ \lvert \det  J_{H^{-1}}(u) \rvert $ is bounded above and below by finite strictly positive constants, implying the result.
\end{proof}

For  a Lévy white noise $\dot{X}$, the rescaling $\dot{X}(\cdot / b)$ of a factor $b\neq 0$ is the generalized random process defined by $\langle \dot{X}(\cdot / b) , \varphi \rangle = \langle \dot{X}, b^d \varphi(b \cdot) \rangle$, for $\varphi \in \D(\R^d)$. We see easily that $\dot{X}(\cdot / b)$ is itself a L\'evy white noise. Similarly, for $a \neq 0$, the generalized random process $a \dot{X}$ is still a L\'evy white noise.
We say that two generalized random processes $s_1$ and $s_2$ are \emph{independent} if their finite-dimensional marginals  are independent. 
By linearity, this is equivalent to the relation 
\begin{equation*}
\CF_{s_1 + s_2}(\varphi) = \CF_{s_1}(\varphi) \CF_{s_2}(\varphi), \qquad \text{for all } \varphi \in \D(\R^d).
\end{equation*}
If $\dot{X}_1$ and $\dot{X}_2$ are two independent L\'evy white noises, then $\dot{X}_1 + \dot{X}_2$ is also a L\'evy white noise.  

\begin{proposition} \label{prop:scalingindependence}
	Let $\dot{X}$ be a L\'evy white noise and $p \geq 0$. Then we have, for $a$ and $b$ nonzero, and $t_0 \in \R^d$,
	\begin{equation*}
		L^p(\dot{X}) = L^p(a \dot{X}) = L^p(  \dot{X} (\cdot / b))=L^p(\dot X(\cdot-t_0) )
	\end{equation*}
	If $\dot{X}_1$ and $\dot{X}_2$ are two independent L\'evy white noises, then
	\begin{equation} \label{eq:domainsumnoises}
		L^p(\dot{X}_1) \cap L^p(\dot{X}_2) \subseteq L^p(\dot{X}_1+\dot{X}_2).
	\end{equation} 
	Moreover, if at least one of the two L\'evy white noises is symmetric, then \eqref{eq:domainsumnoises} is an equality.
\end{proposition}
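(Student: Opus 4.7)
The plan is to treat the two halves of the proposition separately: each invariance reduces to an explicit relation between the transformed noise and the original noise on test functions, which is then extended via characteristic-function calculations and Proposition~\ref{prop:affineinvariance}; the sum-of-noises claim is handled at the level of the $p$th-order Rajput--Rosinski exponent $\Psi_p$ from Proposition~\ref{integrable}.

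\smallskip\noindent
\textit{Invariances.} The scalar case is direct: by linearity of the noise, $\langle a\dot{X},f\rangle=a\langle \dot{X},f\rangle$ whenever either side is defined (both characteristic functions equal $\exp(\int_{\R^d}\psi(a\xi f(t))\drm t)$), so since $a\neq 0$ both well-definedness and finiteness of the $p$th moment transfer, giving $L^p(\dot{X})=L^p(a\dot{X})$. For rescaling, the paper's definition $\CF_{\dot{X}(\cdot/b)}(\varphi)=\CF_{\dot{X}}(b^d\varphi(b\cdot))$ on $\D(\R^d)$ shows that $\dot{X}(\cdot/b)$ is a L\'evy white noise with L\'evy exponent $\psi_b(\xi)=b^{-d}\psi(b^d\xi)$. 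A direct comparison of characteristic functions then establishes the identity in law
\begin{equation*}
\langle \dot{X}(\cdot/b),f\rangle\stackrel{d}{=}\langle \dot{X},b^d f(b\,\cdot)\rangle,
\end{equation*}
first for indicators $f=\One_A$ (by matching $\exp(\mathrm{Leb}_d(A)\psi_b(\xi))$ against $\exp(b^{-d}\mathrm{Leb}_d(A)\psi(b^d\xi))$), then for simple functions by linearity, and finally for any $f\in L^p(\dot{X}(\cdot/b))$ by the Orlicz-space density from Proposition~\ref{prop:LdotXOrlicz} and the continuity of $g\mapsto\langle\dot{X},g\rangle$ on $L^p(\dot{X})$ from Theorem~\ref{prop:continuity}. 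This reduces membership in $L^p(\dot{X}(\cdot/b))$ to membership of $b^d f(b\cdot)$ in $L^p(\dot{X})$, which by the diffeomorphism invariance in Proposition~\ref{prop:affineinvariance} applied to $H(t)=bt$ (combined with the vector-space structure of $L^p(\dot{X})$) is equivalent to $f\in L^p(\dot{X})$. The translation case is handled identically using $H(t)=t-t_0$.

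\smallskip\noindent
\textit{Sum of independent noises.} Let $(\gamma_i,\sigma_i^2,\nu_i)$ be the L\'evy triplet of $\dot{X}_i$; by independence $\dot{X}_1+\dot{X}_2$ has triplet $(\gamma_1+\gamma_2,\sigma_1^2+\sigma_2^2,\nu_1+\nu_2)$. Define
\begin{equation*}
D_i(\xi):=\gamma_i\xi+\int_{\R}x\xi\bigl(\One_{|x\xi|\leq 1}-\One_{|x|\leq 1}\bigr)\nu_i(\drm x),
\end{equation*}
so that every term of $\Psi_p^i$ except the drift $|D_i(\xi)|$ is additive over $i$, while the drift of $\dot{X}_1+\dot{X}_2$ is $|D_1(\xi)+D_2(\xi)|$. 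The pointwise bound $|D_1+D_2|\leq|D_1|+|D_2|$ gives $\Psi_p^{1+2}\leq\Psi_p^1+\Psi_p^2$, which after integration against $f$ and an appeal to Proposition~\ref{integrable} yields the inclusion $L^p(\dot{X}_1)\cap L^p(\dot{X}_2)\subseteq L^p(\dot{X}_1+\dot{X}_2)$. For the reverse inclusion when $\dot{X}_2$ is symmetric, one has $\gamma_2=0$ and $\nu_2$ symmetric; the integrand defining $D_2$ is odd in $x$, so $D_2\equiv 0$ and $|D_1+D_2|=|D_1|$. Hence $\Psi_p^{1+2}=\Psi_p^1+\Psi_p^2$ holds with equality, and the non-negativity of each $\Psi_p^i$ gives $\int_{\R^d}\Psi_p^{1+2}(f(t))\drm t<\infty$ iff $\int_{\R^d}\Psi_p^i(f(t))\drm t<\infty$ for both $i$, i.e. $L^p(\dot{X}_1+\dot{X}_2)=L^p(\dot{X}_1)\cap L^p(\dot{X}_2)$.

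\smallskip\noindent
The only genuinely delicate step is the extension of the identity $\langle \dot{X}(\cdot/b),f\rangle\stackrel{d}{=}\langle \dot{X},b^d f(b\cdot)\rangle$ from test functions to arbitrary $f\in L^p(\dot{X}(\cdot/b))$; once this is established the scaling case closes immediately by Proposition~\ref{prop:affineinvariance}. The sum-of-noises part is then essentially a one-line Rajput--Rosinski computation in which the symmetry assumption does exactly the work needed to kill the cross-drift term $D_2$ that would otherwise obstruct additivity of $\Psi_p^{1+2}$.
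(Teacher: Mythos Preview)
Your proof is correct and follows essentially the same route as the paper: both the invariance claims and the sum-of-noises claim are reduced to the identity $\langle \dot{X}(\cdot/b),f\rangle = \langle \dot{X}, b^d f(b\cdot)\rangle$ together with Proposition~\ref{prop:affineinvariance}, and to the pointwise inequality $\Psi_p^{1+2}\leq \Psi_p^1+\Psi_p^2$ (with equality when one drift term $D_i$ vanishes by symmetry), respectively. You are in fact more careful than the paper in justifying the extension of the rescaling identity from $\D(\R^d)$ to general $f$ via density and continuity; the paper simply asserts it, and your citation for the $\Psi_p$-characterization of $L^p(\dot{X})$ should point to \eqref{eq:Psip} rather than Proposition~\ref{integrable}, but these are cosmetic.
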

	
\begin{proof}
	We have $\langle \dot{X}(\cdot / b) , f \rangle = \langle \dot{X}, b^d  f (b \cdot) \rangle$, so that $f \in L^p(\dot{X}(\cdot / b)) $ if and only if $ b^d f(b \cdot) \in L^p(\dot{X})$. Then, $L^p(\dot{X})$ being a linear space that is invariant by rescaling (Proposition \ref{prop:affineinvariance}), the latter condition is equivalent to $f\in L^p(\dot{X})$, hence $L^p(  \dot{X} (\cdot / b)) = L^p(\dot{X})$. We proceed similarly for $L^p( a \dot{X})$ and $L^p(\dot X(\cdot-t_0) )$.
	
	For $i=1,2$, the L\'evy triplet of $\dot{X}_i$ ($\dot{X})$, respectively) is denoted by $(\gamma_i, \sigma^2_i, \nu_i)$ ($(\gamma,\sigma^2,\nu)$, respectively), and the corresponding exponent is $\Psi_{p,i}$ ($\Psi_{p}$, respectively).
	If $\dot{X}_1$ and $\dot{X}_2$ are independent, we have the relations $$\gamma = \gamma_1 + \gamma_2, \ \sigma^2 = \sigma_1^2 + \sigma_2^2, \ \nu = \nu_1 + \nu_2.$$
	Therefore, by the triangular inequality, we have
	\begin{align*}
		\Psi_p (\xi) & =   \Big\lvert (\gamma_1 + \gamma_2) \xi + \int_{\R} x \xi \left( \One_{\abs{x \xi }\leq 1}- \One_{\abs{x}\leq 1} \right)( \nu_1 + \nu_2) (\drm x) \Big\rvert  
				  + (\sigma^2_1 + \sigma^2_2) \xi^2  + \int_{\R} (\lvert \xi x \rvert^p  \wedge \lvert \xi x \rvert^2) ( \nu_1 + \nu_2)(\drm x)  \nonumber \\
				& \leq \Psi_{p,1}(\xi) + \Psi_{p,2}(\xi),
	\end{align*}
	which proves \eqref{eq:domainsumnoises}. When one of the noise is symmetric, for instance $\dot{X}_1$, the latter inequality is an equality since $  \gamma_1 \xi + \int_{\R}  x \xi \left( \One_{\abs{x \xi }\leq 1}- \One_{\abs{x}\leq 1} \right) \nu_1(\drm x)   =0$ and \eqref{eq:domainsumnoises} is an equality. 
\end{proof}

In general, \eqref{eq:domainsumnoises} is only an inclusion. Consider for instance the case where $\dot{X}_1$ and $\dot{X}_2$ have L\'evy triplet $(1,1,0)$ and $(-1,0,0)$ respectively, meaning that $\dot{X}_1$ is a Gaussian white noise with drift $\gamma =1$ and $\dot{X}_2$ a pure drift $\gamma = -1$. Then, $\dot{X}_1$ and $\dot{X}_2$ are clearly independent, and $\dot{X}_1 + \dot{X}_2$ is a Gaussian white noise with no drift. Therefore, $L^p(\dot{X}_1 + \dot{X}_2) = L^2(\R^d)$ but $L^p(\dot{X}_1) \cap L^p(\dot{X}_2) = L^2(\R^d) \cap L^1(\R^d)$ (see Section \ref{sec:GaussAndDrift} for more details on the determination of those domains). 

For $\gamma \in \R$ and $\nu$ a L\'evy measure, we set
 \begin{equation*}
 m_{\gamma,\nu}(\xi) = \left\vert \gamma \xi + \int_{\R}  x \xi \left( \One_{\abs{x \xi }\leq 1}- \One_{\abs{x}\leq 1} \right)  \nu(\drm x) \right\rvert.
 \end{equation*}
The next  result is taken from \cite{Rajput1989spectral}. 

\begin{proposition}[Reduction to the symmetric case without Gaussian part] \label{prop:reduction}
	Let $(\gamma, \sigma^2, \nu)$ be a L\'evy triplet. We also denote by $\nu_{\mathrm{sym}}$ the symmetrization of $\nu$. {Thereafter, $\dot{X}$, $\dot{X}_2$, and $\dot{X}_{\mathrm{sym}}$ are L\'evy white noises with respective L\'evy triplets $(\gamma, \sigma^2, \nu)$,  $(\gamma, 0, \nu)$, and   $(0,\sigma^2,\nu_{\mathrm{sym}})$.}
Then, we have the following relations for $p \geq 0$:
	\begin{itemize}
		\item If $\sigma^2 \neq 0$, then 
		\begin{align} \label{eq:LpwithoutGaussian}
			L^p(\dot{X}) &= L^2(\R^d) \cap L^p(\dot{X}_2).
		\end{align}
		\item In any case,
		\begin{align} \label{eq:Lpsymmetrized}
			L^p(\dot{X}) &= L^p(\dot{X}_{\mathrm{sym}}) \cap \{f \in L^0(\dot{X}), \ \int_{\R^d} m_{\gamma,\nu}(f(t)) \drm t < \infty \}.
		\end{align}			
	\end{itemize}
\end{proposition}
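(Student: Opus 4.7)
The plan is to prove both identities by direct comparison of the $p$th-order Rajput--Rosinski exponents of the three noises involved, using the criterion $f \in L^p(\dot{X}) \iff \int_{\R^d} \Psi_p(f(t)) \drm t < \infty$ established just before the statement. Writing the exponent as
\begin{equation*}
\Psi_p(\xi) = m_{\gamma,\nu}(\xi) + \sigma^2 \xi^2 + \int_{\R}\bigl( |x\xi|^p \One_{|x\xi|>1} + |x\xi|^2 \One_{|x\xi|\leq 1}\bigr)\nu(\drm x),
\end{equation*}
splits the analysis cleanly between the drift term $m_{\gamma,\nu}$, the Gaussian term $\sigma^2 \xi^2$, and the jump integral.

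For the first identity, I would observe that the exponent of $\dot{X}_2 = (\gamma, 0, \nu)$ is simply $\Psi_{p,2}(\xi) = \Psi_p(\xi) - \sigma^2 \xi^2$, so $\Psi_p(\xi) = \Psi_{p,2}(\xi) + \sigma^2 \xi^2$. Integrating over $\R^d$, the condition $\int \Psi_p(f) < \infty$ is equivalent to the two separate conditions $\int \Psi_{p,2}(f) < \infty$ and $\sigma^2 \int f^2 < \infty$. Since $\sigma^2 \neq 0$, the second is equivalent to $f \in L^2(\R^d)$, giving \eqref{eq:LpwithoutGaussian}.

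For the second identity, the key observation is that the drift part of the symmetrized noise vanishes: the integrand $x\xi(\One_{|x\xi|\leq 1} - \One_{|x|\leq 1})$ is odd in $x$, hence $m_{0,\nu_{\mathrm{sym}}}(\xi) = 0$ since $\nu_{\mathrm{sym}}$ is an even measure. Furthermore, the truncated jump integrand $h_\xi(x) := |x\xi|^p \One_{|x\xi|>1} + |x\xi|^2 \One_{|x\xi|\leq 1}$ is even in $x$, so $\int h_\xi \drm\nu_{\mathrm{sym}} = \int h_\xi \drm\nu$ by the definition of symmetrization. Combining, the exponent $\Psi_{p,\mathrm{sym}}$ of $\dot{X}_{\mathrm{sym}}$ satisfies $\Psi_{p,\mathrm{sym}}(\xi) = \Psi_p(\xi) - m_{\gamma,\nu}(\xi)$, so integrating pointwise decomposes $\int \Psi_p(f) < \infty$ into the pair of conditions $f \in L^p(\dot{X}_{\mathrm{sym}})$ and $\int m_{\gamma,\nu}(f) < \infty$. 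The membership $f \in L(\dot{X})$ in \eqref{eq:Lpsymmetrized} is then automatic from these two since $\Psi \leq \Psi_{\mathrm{sym}} + m_{\gamma,\nu}$ by the same decomposition applied at $p=0$.

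The only delicate point is verifying that the symmetrization indeed cancels the drift exactly and preserves the even part of the jump integral, which requires nothing more than checking parity of the relevant integrands in $x$; everything else reduces to Fubini and the additivity of finite integrals. No serious obstacle is anticipated, so this is essentially a bookkeeping argument once the exponent decomposition is laid out.
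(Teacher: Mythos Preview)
Your proposal is correct. For the first identity, your exponent-splitting argument is exactly what the paper does, only the paper packages it through the preceding Proposition~\ref{prop:scalingindependence} (writing $\dot{X} = \dot{X}_2 + \dot{X}_{\mathrm{Gauss}}$ with the Gaussian piece symmetric, so that \eqref{eq:domainsumnoises} holds with equality); unwinding that proposition gives precisely your computation $\Psi_p = \Psi_{p,2} + \sigma^2\xi^2$.

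For the second identity, the paper does not argue at all: it simply invokes \cite[Proposition~2.9]{Rajput1989spectral}. Your direct route---observing that the drift integrand is odd so $m_{0,\nu_{\mathrm{sym}}} \equiv 0$, and that the truncated-moment integrand $h_\xi(x)$ is even so $\int h_\xi\,\drm\nu_{\mathrm{sym}}$ and $\int h_\xi\,\drm\nu$ are comparable---is the natural self-contained argument and is presumably what Rajput and Rosinski do internally. One small caveat: depending on whether $\nu_{\mathrm{sym}}$ denotes $\tfrac12(\nu+\tilde\nu)$ or $\nu+\tilde\nu$, your claimed equality $\int h_\xi\,\drm\nu_{\mathrm{sym}} = \int h_\xi\,\drm\nu$ may carry a factor $2$; this is immaterial for the finiteness conclusion, but worth stating as a two-sided bound rather than an equality. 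Your final remark that the condition $f\in L(\dot X)$ on the right of \eqref{eq:Lpsymmetrized} is redundant (it follows from the other two via $\Psi \leq \Psi_{\mathrm{sym}} + m_{\gamma,\nu}$) is correct and a nice observation not made explicit in the paper.
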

\begin{proof}
We can decompose $\dot{X} = \dot{X}_2 + \dot{X}_{\mathrm{Gauss}}$, where $\dot{X}_2$ and $ \dot{X}_{\mathrm{Gauss}}$ are independent with respective L\'evy triplets $(\gamma, 0 ,\nu)$ and $(0,\sigma^2,0)$. Then, $ \dot{X}_{\mathrm{Gauss}}$ is a Gaussian white noise, for which $L^p( \dot{X}_{\mathrm{Gauss}}) = L^2(\R^d)$. We apply \eqref{eq:domainsumnoises} with equality ($ \dot{X}_{\mathrm{Gauss}}$ being symmetric) to obtain  \eqref{eq:LpwithoutGaussian}.
Finally, \eqref{eq:Lpsymmetrized} is a reformulation of \cite[Proposition 2.9]{Rajput1989spectral}.	
\end{proof}

Based on Proposition \ref{prop:reduction}, we restrict our attention to symmetric L\'evy white noises without Gaussian part.
We first reduce to the case $ \sigma^2=0$ thanks to   \eqref{eq:LpwithoutGaussian}.  
The only remaining part to deduce the general case from the symmetric one is the identification of functions $f$ satisfying   $ \int_{\R^d} m_{\gamma,\nu}(f(t)) \drm t < \infty$. 
Primarily, for non-symmetric noise, this usually relies on $L^1$-type conditions, but we leave this topic open for further investigations.

	\subsection{The spaces $L^{p_0,p_\infty}(\R^d)$}
        
        We introduce the family of function spaces that generalize the $L^p$-spaces for $0<p<\infty$.
They will be identified later on as the domain of definition of important classes of L\'evy white noises.
	We first give some notations. For $0\leq p_0 , p_\infty < \infty$, we set
        \begin{align*}
        		\rho_{p_0 , p_\infty}(\xi) &:=  \lvert \xi \vert^{p_0} \One_{\lvert \xi \rvert > 1} +  \lvert \xi \vert^{p_\infty} \One_{\lvert \xi \rvert \leq  1}, \\
        		\rho_{\log,p_\infty}(\xi) &:= (1+ \log  \lvert \xi \rvert) \One_{\lvert \xi \rvert > 1} +  \lvert \xi \vert^{p_\infty} \One_{\lvert \xi \rvert \leq  1} .
	\end{align*}
	with the convention that {$0^0=0$}.
        
        \begin{definition} \label{def:Lpq}
        		For $0\leq p_0, p_\infty < \infty$, we define
		\begin{align*}
			L^{p_0, p_\infty}(\R^d) &= \left\{ f   \text{ measurable}, \  \rho_{p_0,p_\infty} (f)  := \int_{\R^d} \rho_{p_0,p_\infty} (f(t)) \drm t  < \infty  \right\}, \\
			L^{\log, p_\infty}(\R^d) &= \left\{ f  \text{ measurable},  \ \rho_{\log,p_\infty} (f)   :=  \int_{\R^d}  \rho_{\log,p_\infty} (f(t)) \drm t < \infty  \right\}.
		\end{align*}
        \end{definition} 
    For $p>0$, we have $L^{p,p} (\R^d) = L^p (\R^d)$. 
    Roughly speaking, $p_0$ measures the local integrability of a function, while $p_\infty$ indicates the asymptotic one.
This is illustrated by the following example. For $\alpha,\beta > 0$, the function $f(t) = \abs{t}^{-\alpha} \One_{\abs{t} < 1} +  \abs{t}^{-\beta} \One_{\abs{t} \geq 1}$ is such that
    \begin{align*}
    	\rho_{p_0,p_\infty}(f) &= \int_{\R^d} \left( \lvert f(t) \vert^{p_0} \One_{\lvert f(t) \rvert > 1} +  \lvert f(t) \vert^{p_\infty} \One_{\lvert f(t) \rvert \leq  1} \right)  \drm t  = \int_{\abs{t} < 1} \abs{t}^{- p_0 \alpha} \drm t +  \int_{\abs{t}  \geq  1} \abs{t}^{- p_\infty\beta} \drm t.
    \end{align*}
    Therefore, $f$ is in $L^{p_0,p_\infty}(\R^d)$ if and only if
   \begin{equation*}
  \alpha < \frac{d}{p_0} \text{ and } \beta > \frac{d}{p_\infty}.
   \end{equation*}
   The first inequality effectively refers to the integrability of $f$ at the origin (or local integrability), while the second covers its asymptotic integrability.

       As we did in Section~\ref{subsec:extension}  with the spaces $L^0(\dot{X})$ and  $L^p(\dot{X})$, we rely on generalized Orlicz spaces~\cite[Chapter X]{Rao1991theory} to identify the structure of the spaces $L^{p_0,p_\infty}(\R^d)$. 

	\begin{proposition}  \label{prop:rhopqisnice}
		We fix  $p_0 \geq 0$ and $p_\infty > 0$.
 		The functions $\rho_{p_0,p_\infty}$ and $\rho_{\log,p_\infty}$ are  $\Delta_2$-regular $\varphi$-functions.
	\end{proposition}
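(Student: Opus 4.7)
The plan is to verify each defining condition of a $\Delta_2$-regular $\varphi$-function (Definition \ref{def:phifun}) directly on the piecewise expressions for $\rho_{p_0,p_\infty}$ and $\rho_{\log,p_\infty}$. Since both functions are built by gluing a power (respectively logarithmic) branch on $\{|\xi|>1\}$ to the power branch $|\xi|^{p_\infty}$ on $\{|\xi|\I 1\}$, every verification reduces to a straightforward case analysis on the position of $|\xi|$ relative to the break-point $1$.

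First I would dispatch the elementary $\varphi$-function properties. Both functions are even, vanish at the origin, are strictly positive elsewhere, and continuous on $\R$: the only non-trivial check is continuity at $|\xi|=1$, where the two branches agree since $1^{p_0}=1^{p_\infty}=1$ for $\rho_{p_0,p_\infty}$ and $1+\log 1 = 1^{p_\infty} = 1$ for $\rho_{\log,p_\infty}$. Monotonicity of $|\xi|\mapsto \rho(\xi)$ follows from $p_\infty>0$ on $[0,1]$ and from $p_0\s 0$ (respectively from the monotonicity of $\log$) on $[1,+\infty)$, the two branches matching at the joining point.

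The main content is $\Delta_2$-regularity, i.e.\ the existence of $K>0$ such that $\rho(2\xi)\I K\,\rho(\xi)$ for every $\xi\in\R$. I would split the analysis into three regions. For $|\xi|\I 1/2$ both $\xi$ and $2\xi$ lie in the lower branch, giving the exact identity $\rho_{p_0,p_\infty}(2\xi) = 2^{p_\infty}\rho_{p_0,p_\infty}(\xi)$. For $|\xi|>1$ both lie in the upper branch, yielding $\rho_{p_0,p_\infty}(2\xi)=2^{p_0}\rho_{p_0,p_\infty}(\xi)$; for the logarithmic version the elementary bound $1+\log|2\xi| = (1+\log|\xi|)+\log 2\I (1+\log 2)(1+\log|\xi|)$ does the job. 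Taking the maximum over the three regions then provides the constant $K$.

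The only real, and rather mild, obstacle is the transition region $1/2 < |\xi|\I 1$, in which $\xi$ and $2\xi$ sit on opposite sides of the break-point, so that no direct identity of the form $\rho(2\xi) = c\,\rho(\xi)$ is available. I would handle it by exploiting the compactness of this interval: $\rho(2\xi)$ is bounded above by a constant (since $|2\xi|\I 2$, using $2^{p_0}$ or $1+\log 2$), while $\rho(\xi)$ is bounded below by $2^{-p_\infty}>0$ (since $|\xi|>1/2$ and $p_\infty>0$). The uniform ratio so obtained combines with the two exact identities above to deliver $\Delta_2$-regularity, completing the proof.
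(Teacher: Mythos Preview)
Your proof is correct and follows essentially the same strategy as the paper: verify the $\varphi$-function axioms directly, then split the $\Delta_2$ estimate into the three regions $|\xi|\leq 1/2$, $1/2<|\xi|\leq 1$, and $|\xi|>1$. The only difference is cosmetic: in the transition region the paper bounds $2^{p_0}|\xi|^{p_0}$ by $\max(2^{p_0},2^{p_\infty})\,|\xi|^{p_\infty}$ via the elementary inequality $|\xi|^{p_0-p_\infty}\leq \max(2^{p_\infty-p_0},1)$ for $1/2\leq |\xi|\leq 1$, whereas you use the equivalent compactness observation that the numerator is bounded above and the denominator below on that interval.
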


	\begin{proof}
		To simplify the notation, we write $\rho = \rho_{p_0,p_\infty}$ in this proof.
		The function $\rho$ is continuous , non-decreasing, symmetric, and vanishes at the origin (since $p_\infty \neq 0$).
		It is therefore a $\varphi$-function.
		
		Then, we have the following decomposition
		\begin{equation*} \label{eq:boundDelta2_second}
			\rho( 2 \xi ) 
			= 2^{p_0} \abs{\xi}^{p_0} \One_{\abs{\xi} > 1} 
			+ 2^{p_0} \abs{\xi}^{p_0} \One_{1/2 < \abs{\xi} \leq 1} 
			+ 2^{p_\infty} \abs{\xi}^{p_\infty} \One_{\abs{\xi} \leq 1/2}. 
		\end{equation*}		
		For $1/2 \leq \abs{\xi} \leq 1$, we have that $\abs{\xi}^{p_0 - p_\infty} \leq \max (2^{p_\infty - p_0}, 1) $. Therefore, we have that
		\begin{align*}
			\rho( 2 \xi )
			& \leq 2^{p_0} \abs{\xi}^{p_0} \One_{\abs{\xi} > 1} 
			+ 2^{p_0}  \max (2^{p_\infty, p_0}, 1)  \abs{\xi}^{p_\infty} \One_{1/2 < \abs{\xi} \leq 1} 
			+ 2^{p_\infty} \abs{\xi}^{p_\infty} \One_{\abs{\xi} \leq 1/2}   \leq \max(2^{p_0}, 2^{p_\infty}) \rho(\xi).
		\end{align*} 
		Therefore, $\rho$ is $\Delta_2$-regular.
		The proof for $\rho_{\log,p_\infty}$   is very similar.
	\end{proof}
	
	Proposition \ref{prop:rhopqisnice} coupled with Proposition \ref{prop:orlicz} allow us to identify the structure of the spaces $L^{p_0,p_\infty}(\R^d)$ and $L^{\log,p_\infty}(\R^d)$
    
    \begin{proposition}  \label{prop:structureLpq}
    	We fix  $ p_0 \geq 0$ and $p_\infty  > 0$.
	Then, $L^{p_0,p_\infty}(\R^d) = L^{\rho_{p_0,p_\infty}}(\R^d)$ is a generalized Orlicz space associated to the $\varphi$-function $\rho_{p_0,p_\infty}$. It is in particular a complete linear metric space for the F-norm
	\begin{equation*}
		\lVert f \rVert_{\rho_{p_0,p_\infty}} := \inf \{\lambda > 0 , \ \rho_{p_0,p_\infty}(f / \lambda) \leq \lambda \}.
	\end{equation*}
	Finally, simple functions are dense in $L^{p_0,p_\infty}(\R^d)$.
	The same conclusions occur for $L^{\log,p_\infty}(\R^d)$.
	\end{proposition}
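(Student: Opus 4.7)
The plan is to apply the abstract generalized Orlicz space machinery recalled in Appendix \ref{app:orlicz}, exactly as was done for $L(\dot{X})$ in the proof of Proposition \ref{prop:LdotXOrlicz}. By definition of Definition \ref{def:Lpq}, the space $L^{p_0,p_\infty}(\R^d)$ coincides with the generalized Orlicz space $L^{\rho_{p_0,p_\infty}}(\R^d)$ constructed from the modular $f \mapsto \int_{\R^d} \rho_{p_0,p_\infty}(f(t)) \drm t$. Proposition \ref{prop:rhopqisnice} provides the key hypothesis: $\rho_{p_0,p_\infty}$ is a $\Delta_2$-regular $\varphi$-function. I can then invoke Proposition \ref{prop:orlicz} directly, which yields that $L^{p_0,p_\infty}(\R^d)$ is a complete linear metric space under the F-norm
\begin{equation*}
	\lVert f \rVert_{\rho_{p_0,p_\infty}} = \inf \{ \lambda > 0 , \ \rho_{p_0,p_\infty}(f/\lambda) \leq \lambda \}.
\end{equation*}

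For the density of simple functions, this is one of the standard conclusions packaged in Proposition \ref{prop:orlicz} for $\Delta_2$-regular $\varphi$-functions (this is precisely what was used in the proof of Proposition \ref{prop:LdotXOrlicz}, where density of simple functions was the stepping stone for density of $\D(\R^d)$). No additional argument is required beyond checking that the hypotheses of the appendix result are met, which was done in Proposition \ref{prop:rhopqisnice}.

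Finally, for $L^{\log,p_\infty}(\R^d)$, the argument is strictly parallel: Proposition \ref{prop:rhopqisnice} also asserts that $\rho_{\log,p_\infty}$ is a $\Delta_2$-regular $\varphi$-function, so the same invocation of Proposition \ref{prop:orlicz} gives the corresponding structural statements, with $\rho_{\log,p_\infty}$ replacing $\rho_{p_0,p_\infty}$ throughout. I do not foresee any genuine obstacle here: all the technical content has already been absorbed into Propositions \ref{prop:rhopqisnice} and \ref{prop:orlicz}, and the proof is essentially a reference-chain. The only mild care point is making explicit that the definition of $L^{p_0,p_\infty}(\R^d)$ as a set of measurable functions with finite modular coincides with the formal Orlicz construction, which is immediate from comparing Definition \ref{def:Lpq} with Definition \ref{def:orlicz}.
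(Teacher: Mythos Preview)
Your proposal is correct and follows exactly the paper's approach: the paper gives no explicit proof environment for this proposition, but precedes it with the remark that Proposition \ref{prop:rhopqisnice} coupled with Proposition \ref{prop:orlicz} yields the structure of $L^{p_0,p_\infty}(\R^d)$ and $L^{\log,p_\infty}(\R^d)$, which is precisely the reference chain you spell out. Your observation about the mild care point (matching Definition \ref{def:Lpq} with Definition \ref{def:orlicz} via the $\Delta_2$-regularity clause of Proposition \ref{prop:orlicz}) is apt and makes the argument slightly more explicit than the paper's one-line justification.
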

	
	The following embeddings are easily deduced by bounding the F-norm of the considered function spaces.

\begin{proposition}	 \label{prop:embeddingsLpq}
	We fix  $ p_0 \geq 0$ and $p_\infty  > 0$.
\begin{enumerate}
		{	\item If  $0\leq p_1 \leq p_2<\infty$, we have the   embedding 
			$L^{p_2,p_\infty}(\R^d) \subseteq L^{p_1,p_\infty}(\R^d)$.
		\item If $0< p_1 \leq p_2<\infty$, we have the  embedding 
		$L^{p_0,p_1}(\R^d) \subseteq L^{p_0,p_2}(\R^d)$.
		\item Conditions 1. and  2. remain true by changing $p_0$ to $\log$ and we have the  embeddings, for any $0< p_0 \leq 2$ and $0\leq p_\infty \leq 2$, $L^{p_0,p_\infty}(\R^d) \subseteq L^{\log,p_\infty}(\R^d) \subseteq L^{0,p_\infty}(\R^d)$.}
	\end{enumerate}
    \end{proposition}	
 
In Propositions \ref{prop:structureLpq} and \ref{prop:embeddingsLpq},
we restricted ourselves to the case when $p_\infty \neq 0$. 
The reason is that $\rho_{p_0,0}(0) \neq 0$, 
so that $\rho_{p_0,0}$ is not a $\varphi$-function. 
Therefore, we do not define a generalized Orlicz space in the sense of Rao and Ren \cite{Rao1991theory}.
The space $L^{p_0,0}(\R^d)$ can be described as follows:
It is the space of functions in $L^{p_0} (\R^d)$ whose support has a finite Lebesgue measure. 
We do not specify any topological structure on those spaces, since they will not appear as the domain of definition of any Lévy white noise.
However, the space $L^{2,0}(\R^d)$ will play a role as a common subspace to all the domains of definition of the L\'evy white noises (see Proposition \ref{prop:limitspaces}).
    
	\subsection{Criteria for the Determination of the Domain of Definition}\label{subsec:criteria}

In this section, we consider a symmetric white noise $\dot{X}$ without Gaussian part and with symmetric L\'evy measure $\nu$. 
In particular, for $p \geq 0$, the $p$th-order Rajput--Rosinski exponent in \eqref{eq:Psi}  simply becomes
\begin{align}
  	 \Psi_p(\xi)    =  \lvert \xi \rvert^2 \int_{|x| \leq 1/|\xi|} \lvert x \rvert^2 \nu (\drm x) +  \lvert \xi \rvert^p  \int_{|x|> 1/ \lvert \xi \rvert} \lvert x \rvert^p \nu (\drm x). \label{eq:psimmetric}
\end{align}
The first criterion is applicable as soon as we are able to estimate the behavior of the function $\Psi_p$ at the origin and/or at infinity.

\begin{proposition}[Criteria for the determination of the domain of definition] \label{prop:criteria}
	Let $\dot{X}$ be a symmetric L\'evy white noise without Gaussian part and $0 \leq p \leq 2$.
	\begin{enumerate}
	\item Assume that $\Psi_p(\xi) \leq C \rho_{p_0,p_\infty}(\xi)$ for some constant $C>0$ and every $\xi$, then we have the   embedding
	\begin{equation} \label{eq:practicalX1}
		L^{p_0,p_\infty}(\R^d) \subseteq L^p(\dot{X}). 
	\end{equation}
	\item Assume that $\rho_{p_0,p_\infty}(\xi) \leq C  \Psi_p(\xi)  $ for some constant $C>0$ and every $\xi$, then we have the   embedding
	\begin{equation} \label{eq:practicalX2}
		L^p(\dot{X}) \subseteq L^{p_0,p_\infty}(\R^d)  . 
	\end{equation}
	\item Assume that $\Psi_p(\xi) \underset{0}{\sim} A \lvert \xi \rvert^{p_\infty}$ and $\Psi_p(\xi) \underset{\infty}{\sim} B \lvert \xi \rvert^{p_0}$,
	then \begin{equation}  \label{eq:practicalX3}  L^p(\dot{X}) = L^{p_0,p_\infty}(\R^d) . \end{equation}
	\item  The same holds with $L^{\log,p_\infty}(\R^d)$ instead of $L^{p_0,p_\infty}(\R^d)$  if we replace $\lvert \xi \rvert^{p_0}$ by $ \log \lvert \xi\rvert $.  

	\end{enumerate}
	
\end{proposition}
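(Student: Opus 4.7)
The plan is to exploit the Orlicz-space characterizations already established. By Sections~\ref{subsec:extension}--\ref{subsec:moments} we know that $f\in L^p(\dot X)$ if and only if $\int_{\R^d}\Psi_p(f(t))\drm t<\infty$, and by Proposition~\ref{prop:structureLpq} we know that $f\in L^{p_0,p_\infty}(\R^d)$ if and only if $\int_{\R^d}\rho_{p_0,p_\infty}(f(t))\drm t<\infty$ (analogously for $\rho_{\log,p_\infty}$). Each of the four assertions therefore reduces to comparing the $\varphi$-functions $\Psi_p$ and $\rho_{p_0,p_\infty}$ (or $\rho_{\log,p_\infty}$) pointwise on $\R$ and integrating.

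For parts 1 and 2 I would simply substitute $\xi=f(t)$ into the hypothesized pointwise inequality and integrate over $\R^d$. A global bound $\Psi_p(\xi)\I C\rho_{p_0,p_\infty}(\xi)$ immediately gives $\int_{\R^d}\Psi_p(f)\drm t\I C\int_{\R^d}\rho_{p_0,p_\infty}(f)\drm t$, so any $f\in L^{p_0,p_\infty}(\R^d)$ lies in $L^p(\dot X)$, which is~\eqref{eq:practicalX1}; the reverse inequality dually yields~\eqref{eq:practicalX2}. These steps are essentially tautological once the Orlicz characterization is in hand.

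For part 3 the real content is upgrading the two asymptotic equivalences into a global two-sided comparison of $\Psi_p$ and $\rho_{p_0,p_\infty}$. From $\Psi_p(\xi)\sim A|\xi|^{p_\infty}$ as $\xi\to 0$ I obtain $\eps>0$ and constants $c_0,C_0>0$ such that $c_0\,\rho_{p_0,p_\infty}(\xi)\I\Psi_p(\xi)\I C_0\,\rho_{p_0,p_\infty}(\xi)$ for $|\xi|\I\eps$; symmetrically, from $\Psi_p(\xi)\sim B|\xi|^{p_0}$ at infinity I get analogous constants on $|\xi|\s M$ for some $M>\eps$. On the intermediate annulus $\eps\I|\xi|\I M$, both $\Psi_p$ and $\rho_{p_0,p_\infty}$ are continuous (for $\Psi_p$, via dominated convergence in \eqref{eq:psimmetric} using that $\nu$ is a L\'evy measure) and strictly positive (the positivity of the constants $A,B$ forces $\Psi_p$ to be nonzero). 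On this compact set each function is then bounded between finite positive constants, and stitching the three regimes yields a global inequality $c\,\rho_{p_0,p_\infty}(\xi)\I\Psi_p(\xi)\I C\,\rho_{p_0,p_\infty}(\xi)$. Applying parts 1 and 2 delivers~\eqref{eq:practicalX3}. Part 4 is then verbatim the same argument after replacing $|\xi|^{p_0}$ by $1+\log|\xi|$ and $\rho_{p_0,p_\infty}$ by $\rho_{\log,p_\infty}$.

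The main obstacle I anticipate is the continuity/positivity step for $\Psi_p$ on the intermediate annulus: continuity requires a dominated-convergence argument in~\eqref{eq:psimmetric} with some care at the crossing threshold $|x|=1/|\xi|$, and positivity needs $\nu$ to be nontrivial on a suitable set, a fact that is forced by the non-degeneracy of the asymptotic equivalents. Everything else is essentially bookkeeping inside the Orlicz framework.
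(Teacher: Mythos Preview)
Your proposal is correct and follows essentially the same route as the paper: parts 1 and 2 are obtained by integrating the pointwise comparison of the $\varphi$-functions, and part 3 (and 4) by upgrading the asymptotic equivalences at $0$ and $\infty$ to a global two-sided bound via continuity and non-vanishing of $\Psi_p$ and $\rho_{p_0,p_\infty}$ away from the origin. Your extra care about continuity and positivity of $\Psi_p$ on the intermediate annulus is a welcome refinement of what the paper simply asserts; note that continuity is already part of $\Psi_p$ being a $\varphi$-function (Proposition~\ref{prop:LdotXOrlicz}), and positivity for $\xi\neq 0$ follows once $\nu\neq 0$, which the hypotheses $A,B>0$ indeed force.
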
        

\begin{proof}
	The condition $\Psi_p (\xi) \leq C \rho_{p_0,p_\infty} (\xi)$ implies that, for any function $f \in L^{p_0,p_\infty}(\R^d) $, we have
	\begin{equation*}
		\lVert f \rVert_{\Psi_p} = \int_{\R^d} \Psi_p(f(t)) \drm t \leq C \int_{\R^d} \rho_{p_0,p_\infty} (f(t) )\drm t = C  \lVert f \rVert_{p_0,p_\infty}.
	\end{equation*}
	Therefore, the identity map is continuous from    $L^{p_0,p_\infty}(\R^d)$ to $L^p(\dot{X})$ proving \eqref{eq:practicalX1}. The proof of \eqref{eq:practicalX2} is very similar.
	For the last point, we remark that the two functions $\Psi_p$ and $\rho_{p_0,p_\infty}$ do not vanish for $\xi \neq 0$,  are continuous, and are equivalent at $0$ and infinity. Hence, there exists two constants  such that
	\begin{equation*}
		C_1  \rho_{p_0,p_\infty}(\xi) \leq  \Psi_p(\xi) \leq C_2 \rho_{p_0,p_\infty}(\xi).
	\end{equation*}
	We then apply \eqref{eq:practicalX1}  and \eqref{eq:practicalX2}  to obtain \eqref{eq:practicalX3} 
\end{proof}

Note that the local integrability of test functions (parameter $p_0$) is linked with the asymptotic behavior of $\Psi_p$, while the asymptotic integrability (parameter $p_\infty$) is linked to the behavior of $\Psi_p$ at $0$. 

If we know that the L\'evy measure has some finite moments, then we obtain new information on the domain of definition of the L\'evy white noise.   For  $p,q \geq 0$, we set
\begin{equation} \label{eq:genmoments}
	m_{p,q} (\nu) := \int_{\R} \rho_{p,q} (x) \nu(\drm x) = \int_{\abs{x}> 1} \abs{x}^p \nu (\drm x) + \int_{\abs{x} \leq 1} \abs{x}^q \nu (\drm x),
\end{equation}
called  the \emph{generalized moments} of $\nu$. 
Then, $\nu$ being a L\'evy measure, we have that $m_{0,2} (\nu) < \infty$. 

Consider the L\'evy process $X$, together with its corresponding L\'evy white noise $\dot{X}$, with L\'evy triplet $(0,0,\nu)$.
Then, $\dot{X}$  has finite $p$th moments if and only if $m_{p,2} (\nu) < \infty$ \cite[Theorem 25.3]{Sato1994levy}.
The quantity
\begin{equation*}
	\beta_0 := \sup\{ 0\leq p \leq 2, \ m_{p,2}(\nu) < \infty \}
\end{equation*}
is called the \emph{Pruitt index} and was introduced in \cite{Pruitt1981growth} to study the asymptotic behavior of L\'evy processes.
It measures the growth rate of $X$ at infinity \cite[Section 5.3]{Bottcher2014levy} and is therefore strongly related with the required rate of decay for the control of the Besov regularity of the L\'evy white noise $\dot{X}$ \cite[Theorem 3]{Fageot2017multidimensional}.

In contrast, the \emph{Blumenthal-Getoor index}, defined as
\begin{equation*}
	\beta_\infty := \inf\{ 0\leq q \leq 2, \ m_{0,q}(\nu) < \infty \},
\end{equation*}
relies on the local regularity of $X$ and $\dot{X}$. 
This can be formulated in terms of the strong variation of $X$ \cite[Section 5.4]{Bottcher2014levy} or the local Besov regularity of $X$  (see \cite{Schilling1997Feller}) and of  $\dot{X}$ (see \cite[Corollary 3]{Fageot2017multidimensional}).

In accordance with the previous remarks, the generalized moments of a L\'evy measure $\nu$ have important interpretations for the local and asymptotic behaviors of the L\'evy white noise and the corresponding L\'evy process.

\begin{proposition} \label{lemma:controlPsip}
	Let $\dot{X}$ be a symmetric L\'evy white noise with L\'evy measure $\nu$ and without Gaussian part.
	\begin{itemize}
	\item We assume that $m_{p,2} (\nu) < \infty$ for some $0 \leq p \leq 2$.
	Then, we have, for any $\xi \in \R$, that
	\begin{equation} \label{eq:PsipAlphap}
		m_{p,2}(\nu)  \rho_{p,2}(\xi)  \leq \Psi_p(\xi) \leq  m_{p,2}(\nu)   \rho_{2,p} (\xi).
	\end{equation}
	\item We assume that $m_{p,2} (\nu) < \infty$ for some $p \geq 2$.
	Then, we have, for any $\xi \in \R$, that
	\begin{equation} \label{eq:PsipAlphapbiggerthan2}
		m_{p,2}(\nu)  \rho_{2,p}(\xi)  \leq \Psi_p(\xi) \leq  m_{p,2}(\nu)   \rho_{p,2} (\xi).
	\end{equation}
	\item If   $m_{p_\infty, p_0} (\nu) < \infty$ for some $0\leq p_0 \leq 2$, $0 < p_\infty < \infty$ and if $p\leq p_0, p_\infty$, then
	\begin{equation} \label{eq:PsipAlphap0pinf}
		 \Psi_p(\xi) \leq  m_{\min(p_\infty,2), p_0}(\nu)   \rho_{p_0,\min(p_\infty,2)} (\xi).
	\end{equation}
	\end{itemize}
\end{proposition}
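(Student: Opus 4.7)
The plan is to start from the simplified Rajput--Rosinski exponent
\[
\Psi_p(\xi) = |\xi|^2 \int_{|x|\leq 1/|\xi|}|x|^2 \nu(\drm x) + |\xi|^p \int_{|x|>1/|\xi|}|x|^p \nu(\drm x),
\]
available in the symmetric, driftless, Gaussian-free case, cf.\ \eqref{eq:psimmetric}, and to compare it pointwise with each of the four target bounds by a case analysis on whether $|\xi|\leq 1$ or $|\xi|>1$ (which determines the active branch of the $\rho_{\cdot,\cdot}$ function on the right-hand side). The sole algebraic tool is the elementary monotonicity $(|\xi||x|)^{a}\leq(|\xi||x|)^{b}$ when $a\leq b$ and $|\xi||x|\geq 1$, and the reverse when $|\xi||x|\leq 1$; this lets me transfer an exponent from $|\xi|$ onto $|x|$ inside each region of integration without introducing any constant.

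For each inequality I will split the integration domain into three disjoint pieces, cut at $\min(1,1/|\xi|)$ and $\max(1,1/|\xi|)$; this is forced by the fact that $\Psi_p$ splits at $|x|=1/|\xi|$ while $m_{a,b}(\nu)$ splits at $|x|=1$. On each of the three pieces I apply the monotonicity with the exponent pair $\{p,2\}$, in the direction dictated by the sign of $2-p$ and by whether I am in $\{|\xi|\leq 1\}$ or $\{|\xi|>1\}$. After reassembly, the middle piece (over the strip between $1$ and $1/|\xi|$) merges with the two outer pieces to yield exactly $\int_{|x|\leq 1}|x|^{\mathrm{in}}\nu(\drm x)+\int_{|x|>1}|x|^{\mathrm{out}}\nu(\drm x)=m_{\mathrm{out},\mathrm{in}}(\nu)$, multiplied by the single power of $|\xi|$ matching the active branch of $\rho$. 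This establishes both bounds of \eqref{eq:PsipAlphap} with constant $1$; the second bullet \eqref{eq:PsipAlphapbiggerthan2} is the mirror argument with $2-p\leq 0$, so the upper and lower roles swap. The third bullet is merely a restatement.

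For the fourth bullet, the same three-region decomposition applies, but the target exponents are now $p_0$ (inner) and $q:=\min(p_\infty,2)$ (outer). The hypotheses $p\leq p_0\leq 2$ and $p\leq p_\infty$ (so $p\leq q$) ensure that every monotonicity step goes in the upper-bound direction, and the finiteness $m_{q,p_0}(\nu)<\infty$ follows from $m_{p_\infty,p_0}(\nu)<\infty$ since $|x|^{q}\leq|x|^{p_\infty}$ on $\{|x|>1\}$. The main obstacle is entirely organizational: I must verify in each of the several regions that the exponent trade goes in the correct direction \emph{and} that the three disjoint pieces recombine into $\{|x|\leq 1\}\cup\{|x|>1\}$ with no overlap and no gap, since this is what keeps the multiplicative constant equal to $1$ rather than inflating it by a cumulative factor. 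Once this bookkeeping is in place, every individual estimate reduces to the monotonicity of $t\mapsto t^a$ on $(0,1]$ or on $[1,\infty)$.
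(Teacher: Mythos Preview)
Your proposal is correct and follows essentially the same route as the paper's proof: the paper likewise splits into the cases $|\xi|\leq 1$ and $|\xi|>1$, decomposes the integral in $x$ into the same three regions (cut at $1$ and $1/|\xi|$, i.e.\ \eqref{eq:developPsi} and \eqref{eq:developPsi2}), applies the same elementary monotonicity of powers on each piece, and reassembles to obtain exactly $m_{p,2}(\nu)$ times the appropriate power of $|\xi|$ with constant~$1$. Your unified description via $\min(1,1/|\xi|)$ and $\max(1,1/|\xi|)$ is a slightly more compact bookkeeping of the same computation; the fourth bullet is handled identically in the paper, including your observation that $m_{\min(p_\infty,2),p_0}(\nu)\leq m_{p_\infty,p_0}(\nu)<\infty$.
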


\begin{proof}
	All the inequalities will be obtained by exploiting the position of $\abs{x}$, $\abs{\xi}$, or $\abs{x \xi}$ with respect to $1$.
	{The proofs for the upper and lower bounds in \eqref{eq:PsipAlphap} and \eqref{eq:PsipAlphapbiggerthan2} and the upper bound in \eqref{eq:PsipAlphap0pinf} are very similar, we therefore focus on the last one.
We first assume that $\abs{\xi} \leq 1$. Then, using \eqref{eq:psimmetric}, we  decompose $\Psi_p$ as 
	\begin{equation} \label{eq:developPsi}
		\Psi_p(\xi) = 
		\int_{\lvert x \rvert \leq 1} \lvert x \xi  \rvert^2 \nu(\drm x) 
		+ 
		\int_{1< \lvert x \rvert \leq \frac{1}{\lvert \xi \rvert }} \lvert x \xi  \rvert^2 \nu(\drm x) 
		+ 
		 \int_{\lvert x \rvert > \frac{1}{\lvert \xi \rvert }} \lvert x   \xi \rvert^p\nu(\drm x) .		
	\end{equation}
Applying this relation to $p \leq p_\infty \leq 2$, we deduce that
	\begin{align*}
	\Psi_p(\xi) 
		& \leq
		\int_{\lvert x \rvert \leq 1} \lvert x \rvert^2   \lvert \xi \rvert^{p_\infty} \nu(\drm x) 
		+ 
		\int_{1< \lvert x \rvert \leq \frac{1}{\lvert \xi \rvert }} \lvert  x \xi  \rvert^{p_\infty} \nu(\drm x) 
		+ 
		\int_{\lvert x \rvert > \frac{1}{\lvert \xi \rvert }} \lvert x \xi \rvert^{p_\infty} \nu(\drm x)    = m_{p_\infty,2}(\nu) \abs{\xi}^p. 
	\end{align*}
	If now $p_\infty > 2$, we have, still for $\abs{\xi} \leq 1$, that
	\begin{align*}
	\Psi_p(\xi) 
		& \leq
		\int_{\lvert x \rvert \leq 1} \lvert x \rvert^2   \lvert \xi \rvert^{2} \nu(\drm x) 
		+ 
		\int_{1< \lvert x \rvert \leq \frac{1}{\lvert \xi \rvert }} \abs{x}^{p_\infty} \abs{\xi}^{2} \nu(\drm x) 
		+ 
		\int_{\lvert x \rvert > \frac{1}{\lvert \xi \rvert }} \abs{   \xi }^{2} \nu(\drm x)  	 =  m_{2,2}(\nu) \abs{\xi}^2. 
	\end{align*}
	We deduce that  $\Psi_p(\xi) \leq  m_{\min(p_\infty,2),2}(\nu)  \abs{\xi}^{\min(p_\infty , 2)}$. \\
	
	Assume now that $\abs{\xi} > 1$. Then, we use the decomposition 
 	\begin{equation} \label{eq:developPsi2}
 		\Psi_p(\xi) = 
 		\int_{\lvert x \rvert \leq \frac{1}{\abs{\xi}}} \lvert x \xi  \rvert^2 \nu(\drm x) 
 		+ 
 		\int_{\frac{1}{\abs{\xi}}< \abs{x} \leq 1} \lvert x \xi  \rvert^p \nu(\drm x) 
 		+ 
 		 \int_{\lvert x \rvert > 1} \lvert x   \xi \rvert^p\nu(\drm x) .	
 	\end{equation}
	When, $p \leq p_0 \leq 2$ and $p < p_\infty$, we therefore have that 
	\begin{align*}
	\Psi_p(\xi) 
		& \leq
		\int_{\lvert x \rvert \leq \frac{1}{\abs{\xi}}} \abs{ x \xi}^{p_0} \nu(\drm x) 
		+ 
		\int_{\frac{1}{\abs{\xi}}< \abs{x} \leq 1} \abs{ x\xi }^{p_0} \nu(\drm x) 
		+ 
		 \int_{\lvert x \rvert > 1} \abs{ x }^{\min(p_\infty,2)} \abs{ \xi }^{p_0} \nu(\drm x)	  = m_{\min(p_\infty,2), p_0}(\nu) \abs{\xi}^{p_0}.
	\end{align*}	
	Remarking that $m_{\min(p_\infty,2) , 2}(\nu) \leq m_{\min(p_\infty,2) , p_0}(\nu)$ and combining the bounds for $\abs{\xi} \leq 1$ and $\abs{\xi} > 1$,  we deduce \eqref{eq:PsipAlphap0pinf}. }
\end{proof}

\begin{proposition} \label{prop:limitspaces}
Let $\dot{X}$ be a L\'evy white noise with L\'evy measure $\nu$ and $p >0$. Then, the following statements hold.
 \begin{itemize}
     \item In the general case, we have
	\begin{align}\label{eq:limitL0}
		L^{2,0}(\R^d) \subseteq L^0(\dot{X}) \subseteq L^{0,2}(\R^d),
	\end{align}
	\item If $0< p \leq 2$ and $\dot{X}$ is symmetric such that $m_{p,2}(\nu) < \infty$, then
\begin{equation} \label{eq:limitLp}
	L^{2,p}(\R^d) \subseteq L^{p}(\dot{X}) \subseteq L^{p,2}(\R^d).
\end{equation}
    
    \item If $p \geq 2$ and $\dot{X}$ is symmetric such that $m_{p,2}(\nu) < \infty$, then
    \begin{equation} \label{eq:limitLpbigp}
	    L^{p,2}(\R^d) \subseteq L^p(\dot{X}) \subseteq L^{2,p}(\R^d).
    \end{equation}
    {In particular,   for any symmetric   finite-variance L\'evy white noise 
$L^{2}(\dot{X}) = L^2(\R^d)$.}
    \item If $\dot{X}$ is symmetric without Gaussian part and such that $m_{p_\infty, p_0}(\nu) < \infty$ with $0\leq p \leq p_0, p_\infty \leq 2$,  then
	\begin{equation} \label{eq:limitmoreprecise}
	 L^{p_0,p_\infty}(\R^d) \subseteq L^p(\dot{X}) .
	\end{equation}
 \end{itemize}
 \end{proposition}

\begin{proof}
	When $\dot{X}$ is symmetric without Gaussian part, \eqref{eq:limitL0} and \eqref{eq:limitLp} are directly deduced from \eqref{eq:PsipAlphap} by taking $p=0$ and $p$ general, respectively. 
	Adding a Gaussian part  does not change the conclusions since $L^{2,p} (\R^d) \subseteq L^{p}(\dot{X}_{\mathrm{Gauss}}) = L^2(\R^d) \subseteq L^{p,2}(\R^d)$ for all $0 \leq p \leq 2$ and thanks to \eqref{eq:LpwithoutGaussian}.
	
	We now consider a general Lévy white noise  $\dot{X}$ with L\'evy triplet  $(\gamma,\sigma^2,\nu)$  and $w_{\mathrm{sym}}$ its symmetric version with triplet  $(0,\sigma^2,\nu_{\mathrm{sym}})$. 
We already know that 
$L^{2,0}(\R^d) \subseteq  L^0(\dot{X}_{\mathrm{sym}}) \subseteq L^{0,2}(\R^d)$. 
Moreover, from \eqref{eq:Lpsymmetrized}, we know that 
\begin{equation} \label{eq:rappel}
	L^0(\dot{X}) = L^0(\dot{X}_{\mathrm{sym}}) \cap \left\{f \in L^0(\dot{X}), \ \int_{\R^d} m_{\gamma,\nu}(f(t)) \drm t < \infty \right\}.
\end{equation}
First, we have that $L^0(\dot{X}) \subseteq L^0(\dot{X}_{\mathrm{sym}}) \subseteq L^{0,2}(\R^d)$. Second, due to \eqref{eq:rappel},  it is sufficient to prove that 
$$L^{2,0} (\R^d) \subseteq  \{f \in L^0(\dot{X}), \ \int_{\R^d} m_{\gamma,\nu}(f(t)) \drm t < \infty \}$$
to deduce that $L^{2,0}(\R^d) \subseteq L^0(\dot{X})$. We remark that, for $\lvert \xi \rvert \leq 1$,
\begin{align*}
  m_{\gamma,\nu}(\xi) &= \left\lvert \gamma \xi + \int_{1\leq \lvert x \rvert \leq \frac{1}{\lvert \xi\rvert}}  \xi x  \nu({\drm x}) \right\rvert 
   \leq  \abs{\gamma \xi} + \int_{1\leq \lvert x \rvert \leq \frac{1}{\lvert \xi\rvert}}  \nu({\drm x})   \leq \rvert  \gamma \lvert + \int_{1 \leq \lvert x \rvert} \nu(\drm x),
\end{align*}
and that, for $\lvert \xi \rvert > 1$, 
\begin{align*}
 m_{\gamma,\nu}(\xi) &=  \left\lvert \gamma \xi + \int_{\frac{1}{\lvert \xi\rvert} \leq \lvert x \rvert \leq 1}   \xi  x  \nu(\drm x) \right\rvert 
 \leq \abs{ \gamma \xi } + \int_{\frac{1}{\lvert \xi\rvert} \leq \lvert x \rvert \leq 1} \lvert \xi x \rvert^2 \nu(\drm x)    \leq \left( \rvert \gamma \lvert + \int_{ \lvert x\rvert \leq 1} x^2  \nu(\drm x)\right) \xi^2.
\end{align*}
Therefore, we have $  m_{\gamma,\nu}(\xi)   \leq C \rho_{2,0}(\xi)$ for some constant $C$, which implies that $L^{2,0}(\R^d)$ is included into  $\{f \in L^0(\dot{X}), \ \int_{\R^d} m_{\gamma,\nu}(f(t)) \drm t < \infty \}$, as expected.
Finally, \eqref{eq:limitmoreprecise}   is a direct consequence of \eqref{eq:PsipAlphap0pinf}.  
\end{proof}

\textit{Remarks.}
\begin{itemize}
\item[(i)] The embeddings \eqref{eq:limitL0} inform on  the extreme cases.
In particular, a function in $L^{2,0}(\R^d)$---the space of  functions  in $L^2(\R^d)$ whose support has a finite Lebesgue measure---can be applied to any   Lévy white noise.
This includes the indicator functions $\One_{B}$ with $B$ a Borel set with finite Lebesgue measure or the Daubechies wavelets that are compactly supported and in $L^2(\R^d)$. 
We shall see that finite-variance compound Poisson noises reach the largest possible domain of definition $L^{0,2}(\R^d)$ (see Section \ref{subsubsec:poisson}). 

{The embeddings \eqref{eq:limitLp} and \eqref{eq:limitLpbigp} complement the general statement \eqref{eq:limitL0}. It implies in particular that $\langle \dot{X} , f \rangle$ has a finite $p$th-moment as soon as $\dot{X}$ has.}

{\item[(ii)] The relation \eqref{eq:limitmoreprecise} provides general sufficient conditions ensuring the finiteness of moments of $\langle \dot{X}, f \rangle$. It will play an important role when identifying compatibility condition between a whitening operator and a L\'evy white noise in Section \ref{sec:application}.}

\end{itemize}

	\subsection{Examples} \label{subsec:examples}
        
        In this section, we consider subfamilies of infinitely divisible laws that define important classes of L\'evy white noises.
        For these different classes, we specify the domain of definition $L^0(\dot{X})$ and the domains $L^p(\dot{X})$ of the considered L\'evy white noises.
       
        The function $\One_{[0,1]^d} \in L^{2,0}(\R^d) $ is in the domain of definition of every L\'evy white noise. Moreover, according to  \eqref{eq:CFgeneral}, a L\'evy white noise with characteristic exponent $\psi$ is such that
        \begin{equation*}
        		\Phi_{\langle \dot{X}, \One_{[0,1]^d} \rangle }(\xi) = \exp\left( \int_{\R^d} \psi ( \xi \One_{[0,1]^d} (t) ) \drm t\right) = \exp (\psi(\xi))
        \end{equation*}
        since $\psi(0) = 0$. Therefore, the characteristic exponent of the L\'evy white noise is also the characteristic exponent of the random variable $\langle \dot{X}, \One_{[0,1]^d} \rangle$. We take the convention that the terminology for the  law of this random variable is inherited by the   L\'evy white noise. For instance, a  white noise is said to be Gaussian if the random variable $\langle \dot{X}, \One_{[0,1]^d} \rangle$ is Gaussian.
        
        We   illustrate how to deduce the domain of definitions of Gaussian, S$\alpha$S, generalized Laplace, and compound Poisson white noises.

        		\subsubsection{Gaussian White Noises and Pure Drift White Noises} \label{sec:GaussAndDrift}

		The Gaussian white noise of variance $\sigma^2$ is characterized by the L\'evy triplet $(0,\sigma^2,0)$. With Proposition \ref{integrable}, we directly obtain that, for every $0\leq p \leq 2$, 
	$L^p(\dot{X}_{\mathrm{Gauss}}) = L^2(\R^d)$. 
		Based on these   considerations and Proposition \ref{prop:reduction}, we shall consider L\'evy triplets with $\sigma^2 = 0$ from now. 

		Similarly, the pure drift white noise $\dot{X}_{\mathrm{drift}}$ with mean $\gamma$ is defined from its triplet $(\gamma, 0,0)$. We have in that case that $\dot{X}_{\mathrm{drift}} = \gamma$ almost surely and $\dot{X}_{\mathrm{drift}}$ is a constant---therefore non stochastic---process. Then, we have 
	$L^p(\dot{X}_{\mathrm{drift}}) = L^1(\R^d)$
		 for every $0\leq p \leq 2$.
		
		If now $\dot{X} = \dot{X}_{\mathrm{Gauss}} + \dot{X}_{\mathrm{drift}}$ has L\'evy triplet $(\gamma, \sigma^2,0)$ with $\gamma$ and $\sigma^2 \neq 0$, then the domain is, due to \eqref{eq:domainsumnoises} with equality,
	$L^p(\dot{X}) =   L^2(\R^d)\cap  L^1(\R^d)$.
		
		\subsubsection{Non-Gaussian S$\alpha$S White Noises}
		
		Stable random variables are an important subclass of infinitely divisible random variables. Extensive details on S$\alpha$S random variables and random processes can be found in \cite{Taqqu1994stable}.

	The extension of the symmetric $\alpha$-stable noise to an independently scattered random measure is in fact an $\alpha$-stable random measure in the sense of \cite{Taqqu1994stable}. The identification of the space of deterministic integrable functions has already been carried out in this context in \cite{Taqqu1994stable}, and we merely re-state the result and prove it within our framework.
 
		We fix $0< \alpha < 2$.
		A random variable is symmetric-$\alpha$-stable (S$\alpha$S) if its characteristic function can be written as  $\mathrm{e}^{- \gamma \lvert \xi \rvert^\alpha}$ for some $\gamma >0$. For simplicity, we should only consider $\gamma = 1$ thereafter, since a different $\gamma$ will not change the domain of definition according to Proposition \ref{prop:scalingindependence}. A S$\alpha$S  white noise $\dot{X}_\alpha$ is a L\'evy white noise such that $\langle \dot{X}_\alpha , \One_{[0,1]^d} \rangle$ is a S$\alpha$S random variable. Its characteristic functional is given for $\varphi \in \D(\R^d)$ by $\CF_{\dot{X}_\alpha}(\varphi) = \exp \left( - \lVert \varphi \rVert^{\alpha}_\alpha \right)$ (see for instance \cite[Section 4.2.0]{Unser2014sparse}).
		
		\begin{proposition}\label{prop:salphas} 
			Let $0 < \alpha < 2$. Then, for every $0\leq p < \alpha$, we have
			\begin{equation*}
				L^p(\dot{X}_{\alpha}) = L^{\alpha}(\R^d).
			\end{equation*}
			For $p \geq \alpha$, we have $L^p(\dot{X}_\alpha) = \{0\}$.
		\end{proposition}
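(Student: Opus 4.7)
The plan is to apply the criterion of Proposition \ref{prop:criteria} by computing the Rajput--Rosinski exponent $\Psi_p$ of $\dot{X}_\alpha$ explicitly and identifying its behavior in terms of the $\varphi$-functions $\rho_{p_0,p_\infty}$. First, the L\'evy exponent of $\dot{X}_\alpha$ is read off from the characteristic functional $\CF_{\dot{X}_\alpha}(\varphi) = \exp(-\lVert \varphi \rVert_\alpha^\alpha)$ and Proposition \ref{prop:CharacteristicFunction}: this gives $\psi(\xi) = -\lvert \xi \rvert^\alpha$. By the L\'evy--Khintchine representation \eqref{eq:LK}, this exponent corresponds to the symmetric triplet $(0,0,\nu_\alpha)$ with L\'evy measure $\nu_\alpha(\drm x) = c_\alpha \lvert x \rvert^{-\alpha-1}\drm x$ for a suitable constant $c_\alpha > 0$. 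Since $\dot{X}_\alpha$ is symmetric without Gaussian part, formula \eqref{eq:psimmetric} applies and
\begin{equation*}
\Psi_p(\xi) = c_\alpha \lvert \xi \rvert^2 \int_{\lvert x \rvert \leq 1/\lvert \xi \rvert} \lvert x \rvert^{1-\alpha} \drm x + c_\alpha \lvert \xi \rvert^p \int_{\lvert x \rvert > 1/\lvert \xi \rvert} \lvert x \rvert^{p-\alpha-1} \drm x.
\end{equation*}

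For $0\leq p < \alpha$, both integrals converge (the first because $\alpha < 2$, the second because $p < \alpha$), and an elementary computation shows that each contribution is a positive constant multiple of $\lvert \xi \rvert^\alpha$. Hence $\Psi_p(\xi) = C_{\alpha,p}\, \lvert \xi \rvert^\alpha = C_{\alpha,p}\, \rho_{\alpha,\alpha}(\xi)$ for all $\xi \in \R$. Invoking item 3 of Proposition \ref{prop:criteria} with $p_0 = p_\infty = \alpha$ yields $L^p(\dot{X}_\alpha) = L^{\alpha,\alpha}(\R^d) = L^\alpha(\R^d)$, which is the first claim.

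For $p \geq \alpha$, the second integral $\int_{\lvert x \rvert > 1/\lvert \xi \rvert} \lvert x \rvert^{p-\alpha-1}\drm x$ diverges for every $\xi \neq 0$, so that $\Psi_p(\xi) = +\infty$ whenever $\xi \neq 0$. Consequently $\int_{\R^d} \Psi_p(f(t))\drm t < \infty$ forces $f = 0$ almost everywhere, i.e.\ $L^p(\dot{X}_\alpha) = \{0\}$.

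No step looks genuinely difficult: the only point deserving care is the identification of the L\'evy measure $\nu_\alpha$ from the characteristic functional, which can either be invoked as a classical fact about stable laws or checked directly using the L\'evy--Khintchine formula together with a change of variable. Everything else reduces to two one-dimensional integrals of power functions, and the conclusion follows cleanly from Propositions \ref{integrable} and \ref{prop:criteria}.
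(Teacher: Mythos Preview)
Your argument is correct and follows the same route as the paper for $0\le p<\alpha$: you compute $\Psi_p(\xi)=C_{\alpha,p}\lvert\xi\rvert^\alpha$ from \eqref{eq:psimmetric} and apply Proposition~\ref{prop:criteria}. For $p\ge\alpha$ the paper instead observes that $\langle\dot X_\alpha,f\rangle$ is itself S$\alpha$S and hence has infinite $p$th moment, while you argue directly that $\Psi_p(\xi)=+\infty$ for $\xi\neq0$; both are valid and essentially equivalent.
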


\begin{proof}
	The L\'evy measure of $\dot{X}_\alpha$ is  $\nu(\drm x) = \frac{C_\alpha}{\lvert x \rvert^{\alpha + 1}} \drm x$ with $C_\alpha$ a constant. A non-trivial S$\alpha$S random variable has an infinite $p$th-moment for $p\geq \alpha$, and for every $f \in L^0(\dot{X}_\alpha)$, $\langle \dot{X}, f \rangle$ is a S$\alpha$S random variable. Hence $L^p (\dot{X}) = \{0\}$ for $p\geq \alpha$. The case of interest is  therefore  $0\leq p < \alpha$. Then, from \eqref{eq:psimmetric},
	\begin{align*}
		\Psi_{p}(\xi) &= 2C_\alpha \int_{0}^{1/\lvert \xi \rvert} \frac{\xi^2}{x^{\alpha + 1}} \drm x + 2C_\alpha \int_{1 / \lvert \xi \rvert} \frac{\lvert \xi \rvert^p}{x^{\alpha + 1 - p}} \drm x  = 2C_\alpha \lvert \xi \rvert^\alpha \left( \int_0^1 \frac{\drm y}{y^{\alpha -1}} + \int_{1}^\infty \frac{\drm y }{y^{\alpha + 1 - p}} \right) \\
				&= \left( \frac{2(2-p)C_\alpha}{(2-\alpha)(\alpha-p)} \right) \lvert \xi \rvert^\alpha.
	\end{align*}
 Finally, the result follows from Proposition \ref{prop:criteria}. 
\end{proof}

		\subsubsection{Generalized Laplace White Noises}
		Our goal is to study the   Laplace white noise, for which $\langle \dot{X} , \One_{[0,1]^d} \rangle$ follows a Laplace law. It requires to introduce the family of generalized Laplace laws. We follow here the terminology of \cite[Section 4.1.1]{Koltz2001laplace} and consider only the symmetric case.
		
		A random variable $Y$ is called a \emph{generalized Laplace random variable} if its characteristic function can be written as
		\begin{equation*}
			\Phi(\xi) = \frac{1}{(1 + \frac{1}{2} \sigma^2 \xi^2)^\tau} = \exp \left( - \tau \log (1 + \frac{1}{2} \sigma^2 \xi^2) \right),
		\end{equation*}
		with $\tau > 0$ the \emph{shape parameter} and $\sigma^2$ the \emph{scaling parameter}. We denote this situation by $Y \sim \mathcal{GL}(\sigma,\tau)$. Note that the variance of $Y$ is $\tau \sigma^2$. When $\tau = 1$, we recover the traditional Laplace law. The generalized Laplace laws are infinitely divisible \cite[Section 2.4.1]{Koltz2001laplace} and associated with the L\'evy triplet $(0,0, \nu_{\tau,\sigma^2})$ with \cite[Proposition 2.4.2]{Koltz2001laplace}
		
	\begin{equation*} \label{eq:levymeasurelaplace}
	\nu_{\tau,\sigma^2}(\drm x) = \tau \frac{1}{\lvert x \rvert} \mathrm{e}^{- 2 \lvert x \rvert / \sigma^2} \drm x. 
	\end{equation*}

		\begin{definition}
			We say that a L\'evy white noise $\dot{X}_{\mathrm{Laplace}}$ is a \emph{generalized Laplace white noise} if $\langle \dot{X}_{\mathrm{Laplace}} , \One_{[0,1]^d} \rangle \sim \mathcal{GL}(\sigma,\tau)$ for some $\tau, \sigma^2>0$. We call $\tau$ and $\sigma^2$ respectively the shape parameter and the scaling parameter of $\dot{X}_{\mathrm{Laplace}}$. When $\tau = 1$, we simply say that $\dot{X}_{\mathrm{Laplace}}$ is a \emph{Laplace white noise}.
		\end{definition}
		
	To the best of our knowledge, general integrability conditions for (generalized) Laplace noise has not been investigated in the literature. Proposition \ref{prop:laplacecase} provides such conditions. 
	
	\begin{proposition} \label{prop:laplacecase}
		For every generalized Laplace white noise $\dot{X}_{\mathrm{Laplace}}$, we have
		\begin{equation} \label{prop:domainlaplace}
			L^0(\dot{X}_{\mathrm{Laplace}}) = L^{\log, 2}(\R^d). 
		\end{equation}
		Moreover, for $0<p\leq 2$, we have
		\begin{equation} \label{prop:domainlaplacep}
			L^p(\dot{X}_{\mathrm{Laplace}}) = L^{p, 2}(\R^d). 
		\end{equation}		
	\end{proposition}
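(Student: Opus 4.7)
The plan is to apply Proposition \ref{prop:criteria}, points 3 and 4, by analyzing the asymptotic behavior of the Rajput-Rosinski exponent $\Psi_p$ at $0$ and at infinity. Since $\dot{X}_{\mathrm{Laplace}}$ is symmetric without Gaussian part, formula \eqref{eq:psimmetric} applies, and plugging in the density $\nu_{\tau,\sigma^2}(\drm x) = \tau |x|^{-1} \mathrm{e}^{-2|x|/\sigma^2} \drm x$ yields
\begin{equation*}
\Psi_p(\xi) = 2 \tau |\xi|^2 \int_0^{1/|\xi|} x \, \mathrm{e}^{-2x/\sigma^2} \drm x + 2\tau |\xi|^p \int_{1/|\xi|}^\infty x^{p-1} \, \mathrm{e}^{-2x/\sigma^2} \drm x.
\end{equation*}
The core of the argument is to extract the leading behavior of these two integrals in the limits $|\xi| \to 0$ and $|\xi| \to \infty$, and then invoke the criteria above.

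As $|\xi| \to 0$, the first integral converges to $\int_0^\infty x \mathrm{e}^{-2x/\sigma^2} \drm x = \sigma^4/4$ by monotone convergence, while the second integral—a truncated incomplete gamma tail starting at $1/|\xi| \to \infty$—decays exponentially via standard integration by parts. Therefore $\Psi_p(\xi) \sim (\tau\sigma^4/2) |\xi|^2$ as $\xi \to 0$ for every $p \geq 0$, identifying $p_\infty = 2$ in the criterion.

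As $|\xi| \to \infty$, I would show that the first integral contributes only a bounded term (a Taylor expansion of $\mathrm{e}^{-2x/\sigma^2}$ near $0$ gives $\int_0^{1/|\xi|} x\, \mathrm{e}^{-2x/\sigma^2}\drm x \sim 1/(2|\xi|^2)$, so the factor $|\xi|^2$ out front yields a contribution tending to $1/2$), which is negligible against the second. For the second, I would split $\int_{1/|\xi|}^\infty = \int_{1/|\xi|}^1 + \int_1^\infty$, observing that the piece over $[1,\infty)$ contributes a finite constant. The dominant asymptotics then comes from $\int_{1/|\xi|}^1 x^{p-1} \mathrm{e}^{-2x/\sigma^2} \drm x$. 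For $p > 0$ this converges to $\int_0^1 x^{p-1}\mathrm{e}^{-2x/\sigma^2}\drm x < \infty$, so $\Psi_p(\xi) \sim C_p |\xi|^p$, and Proposition \ref{prop:criteria}.3 with $(p_0, p_\infty) = (p,2)$ delivers $L^p(\dot{X}_{\mathrm{Laplace}}) = L^{p,2}(\R^d)$.

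For $p=0$, the analogous integral diverges logarithmically: using the decomposition
\begin{equation*}
\int_{1/|\xi|}^1 \frac{\mathrm{e}^{-2x/\sigma^2}}{x} \drm x = \log|\xi| + \int_{1/|\xi|}^1 \frac{\mathrm{e}^{-2x/\sigma^2} - 1}{x} \drm x,
\end{equation*}
the remaining integrand is uniformly bounded since $(\mathrm{e}^{-2x/\sigma^2}-1)/x = O(1)$ near $x=0$, so the correction term is $O(1)$. Hence $\Psi_0(\xi) \sim 2\tau \log|\xi|$ at infinity, and Proposition \ref{prop:criteria}.4 yields $L(\dot{X}_{\mathrm{Laplace}}) = L^{\log,2}(\R^d)$. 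The only genuinely delicate point—more careful bookkeeping than an actual obstacle—is isolating this logarithmic asymptotic at $p=0$, which ultimately reflects the non-integrability at the origin of the density $\nu_{\tau,\sigma^2}$, whose leading behavior there is $\tau/|x|$.
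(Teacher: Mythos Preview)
Your proof is correct and follows essentially the same approach as the paper: both split $\Psi_p$ via \eqref{eq:psimmetric} into the ``small-$x$'' and ``large-$x$'' pieces, extract the asymptotics $\Psi_p(\xi) \sim c|\xi|^2$ at $0$ and $\Psi_p(\xi)\sim c'|\xi|^p$ (respectively $\sim c''\log|\xi|$ for $p=0$) at infinity, and invoke Proposition \ref{prop:criteria}. The only differences are cosmetic---the paper normalizes $\tau=1$, $\sigma^2=2$ and computes the first integral by parts rather than by Taylor expansion---and a harmless slip in your constant (the factor in front is $2\tau|\xi|^2$, not $|\xi|^2$, so the bounded contribution is $\tau$ rather than $1/2$).
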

	
	\begin{proof}
		Let $0 \leq p \leq 2$. 
		We start from \eqref{eq:psimmetric} and write
		\begin{equation*}
		 \Psi_p(\xi)     =  \xi^2 \int_{|x| \leq 1/|\xi|} x^2 \nu_{\tau,\sigma^2} (\drm x) +  \lvert \xi \rvert^p  \int_{|x|> 1/ \lvert \xi \rvert} \lvert x \rvert^p \nu_{\tau,\sigma^2} (\drm x) := \Psi_{p,1}(\xi) + \Psi_{p,2}(\xi).
		\end{equation*}
		Without loss of generality, we   consider the case $\sigma^2 = 2$ and $\tau = 1$, in which case $\nu_{1,2}(\drm x) = \frac{\mathrm{e}^{-\lvert x \rvert}}{\lvert x \rvert} \drm x$. Then, by integration by parts, we have
		\begin{align*}
			\Psi_{p,1}(\xi) &= 2 \lvert \xi \rvert^2 \int_{0}^{1 / \lvert \xi\rvert} x \mathrm{e}^{- x} \drm x  = 2 \lvert \xi \rvert^2 \left( 1 - \mathrm{e}^{- 1 / \lvert \xi \rvert} (1 + \frac{1}{\lvert \xi \rvert} ) \right) 
		\end{align*}
		Hence, we have $\Psi_{p,1} (\xi) \underset{\xi \rightarrow \infty}{\longrightarrow} 2$ and $\Psi_{p,1} (\xi) \underset{\xi \rightarrow 0}{\sim} 2\lvert \xi\vert^2$. 
		
		For $\Psi_{p,2}(\xi) = \lvert \xi \rvert^p  \int_{|x|> 1/ \lvert \xi \rvert} \lvert x \rvert^p \nu_{\tau,\sigma^2} (\drm x)$, we shall distinguish between $p=0$ and $p>0$. For $p>0$, the function $x^{p-1}\mathrm{e}^{-x}$ is integrable over $\R$, so that $\Psi_{p,2}(\xi)  \underset{\xi \rightarrow \infty}{\sim} \left( \int_{\R} x^{p-1}\mathrm{e}^{-x} \drm x\right) \lvert \xi\vert^p$. For $p=0$, the function $x^{-1} \mathrm{e}^{-x} $ is not anymore integrable around $0$. Using the equivalence $ x^{-1} \mathrm{e}^{-x}  \underset{x \rightarrow 0}{\sim} x^{-1}$, we deduce that 
		$$ \Psi_{p,2}(\xi) = 2 \int_{   \frac{ 1}{\lvert \xi \rvert}}^{\infty}  x^{-1}\mathrm{e}^{-x} \drm x  \underset{\xi \rightarrow \infty}{\sim}   2 \int_{   \frac{ 1}{\lvert \xi \rvert}}^{1}  x^{-1}\mathrm{e}^{-x} \drm x\underset{\xi \rightarrow \infty}{\sim} 2 \int_{\frac{1}{\lvert \xi \rvert}}^1 x^{-1} \drm x = 2 \log \abs{\xi}.$$
		Moreover, since $p\leq 2$, we have, again by integration by parts,
		\begin{align*}
		\Psi_{p,2}(\xi) &= 2 \int_{x \lvert \xi \rvert>1} (x \lvert \xi \rvert)^p \mathrm{e}^{-x} \frac{ \drm x}{x} \leq 2 \int_{x \lvert \xi \rvert>1} (x \lvert \xi \rvert)^2 \mathrm{e}^{-x} \frac{ \drm x}{x}  = 2 \lvert \xi \rvert (1 + \lvert \xi \rvert) \mathrm{e}^{- 1 / \lvert \xi \rvert} ,
		\end{align*}
		implying that $\Psi_{p,2}(\xi) \underset{\xi \rightarrow 0}{=} o(\lvert \xi \rvert^2)$. By combining the results on $\Psi_{p,1}$ and $\Psi_{p,2}$, we obtain that
		\begin{itemize}
			\item for $0\leq p\leq2$, $\Psi_{p}(\xi) \underset{\xi \rightarrow 0}{\sim} 2 \lvert \xi \rvert^2$;
			\item for $0< p\leq 2$, $\Psi_{p}(\xi) \underset{\xi \rightarrow \infty}{\sim}  \left( \int_{\R} x^{p-1}\mathrm{e}^{-x} \drm x\right) \lvert \xi \rvert^p$;
			\item for $p=0$, $\Psi_{0}(\xi) = \Psi(\xi) \underset{\xi \rightarrow \infty}{\sim} 2 \log \lvert \xi\rvert$.
		\end{itemize}
		We apply now Proposition \ref{prop:criteria} to deduce \eqref{prop:domainlaplace} and \eqref{prop:domainlaplacep}.
	\end{proof}

		\subsubsection{Compound Poisson White Noises}\label{subsubsec:poisson}

\begin{definition}\label{def:poisson}
 A L\'evy white noise is a \emph{compound Poisson noise} if its L\'evy triplet has the form $(0,0,\nu)$ and if its L\'evy measure satisfies
 \begin{equation*}
 \lambda := \int_{\R} \nu(\drm x) < \infty.
 \end{equation*}
In that case, $\nu = \lambda \mathbb{P}$ with $\mathbb{P}$ a probability measure.  
\end{definition}	
	
One can represent a compound Poisson noise $\dot{X}_{\mathrm{Poisson}}$ as \cite[Theorem 1]{Unser2011stochastic}
		\begin{equation*}
			\dot{X}_{\mathrm{Poisson}} = \sum_{n\geq 0} a_n \delta ( \cdot - t_n)
		\end{equation*}
		with $\delta$ the Dirac distribution, $(a_n)$ i.i.d. random variables with probability law $\mathbb{P}$, and $(t_n)$ independent of $(a_n)$ such that, for every Borel set $B \in \R^d$, the random number of elements $t_n$ in $B$ follows a Poisson law with rate $\lambda \mathrm{Leb}(B)$.
	Then, $\dot{X}_{\mathrm{Poisson}}$ has a finite variance if and only if $\mathbb{P}$ has. In that case, it has a zero mean if and only if $\mathbb{P}$ has.
	
	The integration with respect to compound Poisson noise is treated for instance in \cite[Chapter 12]{Kallenberg2006foundations}. However, it relies on  $L^1$-type conditions, inherited from the fact that the stochastic integration follows the L\'evy-It\^o decomposition. We provide more general integrability conditions in Proposition \ref{prop:poissoncase}, that are not sensitive to $L^1$-type integrability. 
	
	\begin{proposition}\label{prop:poissoncase}
	If $\dot{X}_{\mathrm{Poisson}}$ is a symmetric compound Poisson white noise  with finite variance, then
	\begin{equation*}
				L^p (\dot{X}_{\mathrm{Poisson}}) = L^{p,2} (\R^d).
	\end{equation*}
for every $0 \leq p \leq 2$. 
	\end{proposition}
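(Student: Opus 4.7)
My plan is to apply Proposition \ref{prop:criteria}(3), which identifies $L^p(\dot{X})$ with a space $L^{p_0,p_\infty}(\R^d)$ as soon as the Rajput-Rosinski exponent $\Psi_p$ has the correct asymptotic behavior at $0$ and at infinity. Because $\dot{X}_{\mathrm{Poisson}}$ is symmetric and has no Gaussian part, formula \eqref{eq:psimmetric} gives
\begin{equation*}
\Psi_p(\xi) = |\xi|^2 \int_{|x| \leq 1/|\xi|} |x|^2 \, \nu(\drm x) + |\xi|^p \int_{|x| > 1/|\xi|} |x|^p \, \nu(\drm x).
\end{equation*}
The compound Poisson finite-variance assumption means $\lambda := \nu(\R) < \infty$ and $\sigma_\nu^2 := \int |x|^2 \nu(\drm x) < \infty$, and the elementary inequality $|x|^p \leq 1 + |x|^2$ then yields $m_p := \int |x|^p \nu(\drm x) < \infty$ for every $p \in [0,2]$.

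For the behavior at $0$, monotone convergence yields $\int_{|x|\leq 1/|\xi|} |x|^2 \nu(\drm x) \to \sigma_\nu^2$. The tail term is controlled by observing that $|x|^p \leq |\xi|^{p-2} |x|^2$ whenever $|x| \geq 1/|\xi|$ and $p \leq 2$, so
\begin{equation*}
|\xi|^p \int_{|x|>1/|\xi|} |x|^p \, \nu(\drm x) \leq |\xi|^2 \int_{|x|>1/|\xi|} |x|^2 \, \nu(\drm x) = o(|\xi|^2)
\end{equation*}
as $\xi \to 0$, since $\{|x|>1/|\xi|\}$ shrinks to a $\nu$-null set. Hence $\Psi_p(\xi) \sim \sigma_\nu^2 |\xi|^2$ near $0$, fixing $p_\infty = 2$.

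For the behavior at infinity (first assume $0 < p \leq 2$), the small-jump contribution is uniformly bounded by $\lambda$ because $|x|^2 \leq 1/|\xi|^2$ on $\{|x|\leq 1/|\xi|\}$. The tail term converges by monotone convergence, $\int_{|x|>1/|\xi|} |x|^p \nu(\drm x) \to m_p > 0$, so it is asymptotic to $m_p |\xi|^p$, which dominates the bounded small-jump contribution. Hence $\Psi_p(\xi) \sim m_p |\xi|^p$ at infinity, and Proposition \ref{prop:criteria}(3) yields $L^p(\dot{X}_{\mathrm{Poisson}}) = L^{p,2}(\R^d)$.

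The one subtlety, and the principal potential obstacle, is the case $p = 0$: the small-jump term $|\xi|^2 \int_{|x|\leq 1/|\xi|} |x|^2 \nu(\drm x)$ need not vanish as $\xi \to \infty$ (e.g., if $\nu$ has density behaving like $1/|x|$ near the origin), so one does not obtain a genuine equivalent of the form $\Psi_0(\xi) \sim B$. However, $\Psi_0$ remains bounded above by $2\lambda$, and is bounded below by the tail $\int_{|x|>1/|\xi|} \nu(\drm x)\to \lambda > 0$, whence $\Psi_0(\xi)$ is comparable to $1 = |\xi|^0$ at infinity. This is enough to invoke parts (1)--(2) of Proposition \ref{prop:criteria} rather than (3) and conclude $L(\dot{X}_{\mathrm{Poisson}}) = L^{0,2}(\R^d)$. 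Alternatively, and more economically, the $p=0$ identity follows directly from Proposition \ref{prop:limitspaces}: \eqref{eq:limitL0} provides the upper inclusion, while \eqref{eq:limitmoreprecise} applied with $(p_0,p_\infty) = (0,2)$ and $m_{2,0}(\nu) \leq \lambda + \sigma_\nu^2 < \infty$ delivers the reverse one.
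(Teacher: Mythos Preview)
Your proof is correct and takes a somewhat different route from the paper's. The paper argues both inclusions via the general framework: the inclusion $L^p(\dot{X}_{\mathrm{Poisson}})\subseteq L^{p,2}(\R^d)$ is read off directly from \eqref{eq:limitLp} in Proposition~\ref{prop:limitspaces} (valid for any symmetric noise with $m_{p,2}(\nu)<\infty$), and for the reverse inclusion it bounds $\Psi_p(\xi)=\lambda\int(|x\xi|^p\wedge|x\xi|^2)\,\mathbb P(\drm x)$ globally by $C\,(|\xi|^p\wedge|\xi|^2)=C\,\rho_{p,2}(\xi)$, using only the finiteness of the $p$th and $2$nd moments of $\mathbb P$. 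Your approach instead establishes exact asymptotic equivalents $\Psi_p(\xi)\sim\sigma_\nu^2|\xi|^2$ at $0$ and $\Psi_p(\xi)\sim m_p|\xi|^p$ at $\infty$, and then invokes Proposition~\ref{prop:criteria}(3). This is more explicit about the behaviour of $\Psi_p$ and, as you observe, requires a separate treatment of $p=0$ (where there is no genuine equivalent at infinity, only two-sided bounds); the paper's global estimate handles all $0\le p\le 2$ uniformly in two lines.

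One small slip: the pointwise inequality you write to control the tail near $0$, namely $|x|^p\le|\xi|^{p-2}|x|^2$, does not by itself yield the displayed bound $|\xi|^p\int_{|x|>1/|\xi|}|x|^p\,\nu\le|\xi|^2\int_{|x|>1/|\xi|}|x|^2\,\nu$ (it produces $|\xi|^{2p-2}$ instead of $|\xi|^2$). The displayed bound is nevertheless correct, and follows immediately from $|x\xi|^p\le|x\xi|^2$ on $\{|x\xi|\ge 1\}$; you may want to replace the intermediate inequality accordingly.
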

	
	\begin{proof}
		First, $L^p (\dot{X}_{\mathrm{Poisson}}) \subseteq L^{p,2} (\R^d)$ as for any symmetric L\'evy white noise, according to \eqref{eq:limitLp}.  Moreover, for a compound Poisson noise with finite variance, we have for every $q \in [0,2]$ that $ \int_{\R } \lvert x  \rvert^q   \mathbb{P}(\drm x) < \infty$. Therefore, we have
		\begin{align*}
			\Psi_p(\xi) &= \lambda \int_{\R } (  \lvert x \xi \rvert^p \wedge   \lvert x \xi \rvert^2) \mathbb{P}(\drm x)   \leq \lambda \min\left( \lvert \xi \rvert^p   \int_{\R } \lvert x  \rvert^p   \mathbb{P}(\drm x), \lvert \xi \rvert^2   \int_{\R } \lvert x  \rvert^2   \mathbb{P}(\drm x) \right)   \leq C ( \lvert \xi \rvert^p\wedge   \lvert \xi \rvert^2) = \rho_{p,2}(\xi), 
		\end{align*}	
		so that $\lVert f \rVert_{\Psi_p} \leq C \lVert f \rVert_{p,2}$. This means that $ L^{p,2} (\R^d) \subseteq L^p (\dot{X}_{\mathrm{Poisson}}) $, finishing the proof. 
	\end{proof}
	
	        We summarize the results of this section in Table \ref{table:domains}.
        
\begin{table*}[t!] 
\centering
\caption{Definition Domains of some L\'evy White Noises}

\begin{tabular}{ccccc} 
\hline
\hline\\[-2ex] 
White noise & Parameters  & $	\Phi_{\langle \dot{X}, \One_{[0,1]}\rangle}(\xi) $ &  $L^0(\dot{X})$ & $L^p(\dot{X})$ \\
& & & & $0<p \leq2$ 
\\
\hline\\[-1ex]
Gaussian & $\sigma^2 >  0$ & $\mathrm{e}^{- \sigma^2 \xi^2}$ & $L^2(\R^d)$ & $L^2(\R^d)$ \\[+1ex]
Pure drift & $\gamma \in \R$ & $\mathrm{e}^{\mathrm{i}\gamma \xi}$ & $L^1(\R^d)$ & $L^1(\R^d)$ \\[+1ex]
S$\alpha$S & $0<\alpha<2$ & $\mathrm{e}^{-\lvert \xi \rvert^\alpha}$ & $L^\alpha(\R^d)$ & $\begin{cases} L^\alpha(\R^d) &\mbox{if } p<\alpha \\ 
\{0\} & \mbox{if } p\geq \alpha \end{cases}$  \\[+1ex]
generalized & $\sigma^2 > 0$ & $\frac{1}{(1 + \sigma^2\xi/2)^\tau}$ & $L^{\log,2}(\R^d)$ & $L^{p,2}(\R^d)$ \\
Laplace & $\tau >0$  & & & \\[+1ex]
symmetric finite-variance & $\lambda >0$ & $\mathrm{e}^{\lambda (\widehat{\mathbb{P}} (\xi) -1 )}$ & $L^{0,2}(\R^d)$ & $L^{p,2}(\R^d)$ \\
compound Poisson & $\mathbb{P}$ & & & \\
\hline
\hline
\end{tabular} \label{table:domains}
\end{table*}

\section{Application to  SPDEs} \label{sec:application}

In this section, we consider stochastic partial differential equations (SPDEs) of the form
\begin{equation*} \label{eq:SDE}
	\Lop s  = \dot{X}
\end{equation*}	
with $\dot{X}$ a L\'evy white noise and $\Lop$ a differential operator.
{Considerable attention has been given in order to define various classes of random processes and fields depending on the Lévy noise model and the operator. A vast majority of works deal with the Gaussian case~\cite{oksendal2013stochastic,protter2005stochastic}. Generalizations include the analysis of SPDEs driven by stable white noise~\cite{pryhara2016stochastic,pryhara2017wave,Taqqu1994stable} or Lévy white noise~\cite{Applebaum2009levy}.}

Until now, we restricted ourselves to the study of L\'evy white noise. In this section, we   see how to apply our results to solve linear stochastic differential equations of the form
\begin{equation*} \label{eq:SDE}
	\Lop s  = \dot{X}
\end{equation*}	
with $\dot{X}$ a L\'evy white noise and $\Lop$ a differential operator.
We give new conditions of compatibility between the operator $\Lop$ and the white noise $\dot{X}$ such that the process $s$ exists as a generalized random process. 
Our results extend the results of previous works~\cite{Fageot2014,Unser2014sparse,Unser2014unifiedContinuous}.

\begin{theorem}\label{theo:generalcriterion}
	We consider a L\'evy white noise $\dot{X}$.
	We assume that $\Top$ is a continuous and linear operator from $\D(\R^d)$ to $L^0(\dot{X})$.
	Then, the mapping
	\begin{eqnarray}
	s :  & \D(\R^d) &\rightarrow L^0 (\Omega) \nonumber \\
			& \varphi  & \mapsto \langle s ,\varphi \rangle := \langle \dot{X}, \Top \{\varphi \}\rangle
	\end{eqnarray}
	specifies a generalized random process in the sense of Definition \ref{def:GRP}. 
	
	If moreover there exists an operator $\Lop$ such that $\Top \Lop^* \varphi = \varphi$ for every $\varphi \in \D(\R^d)$ (left-inverse property), then we have that
	\begin{equation}
		\Lop s = \dot{X}.
	\end{equation}
\end{theorem}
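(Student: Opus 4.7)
The plan is to verify separately the two assertions: that $s$ defined by $\langle s,\varphi\rangle := \langle \dot X, \Top\{\varphi\}\rangle$ is a bona fide generalized random process, and that $\Lop s = \dot X$ under the left-inverse hypothesis. Both will essentially follow by composition, once we invoke the continuity of $\dot X$ as a random linear functional on $L(\dot X)$ established in Theorem \ref{prop:continuity}.

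For the first part, I would check the two defining properties of Definition \ref{def:GRP} in turn. Linearity of $\varphi \mapsto \langle \dot X, \Top\{\varphi\}\rangle$ is immediate: $\Top$ is linear by assumption and $\dot X$ is linear on $L(\dot X)$ by Theorem \ref{prop:continuity}, so almost sure linearity follows from composing two linear maps (with the usual care that the exceptional null set in each identity $\dot X(f_1+\lambda f_2) = \dot X(f_1) + \lambda \dot X(f_2)$ depends on $f_1,f_2$, but since we only need linearity in the almost-sure sense for each fixed pair $\varphi_1,\varphi_2 \in \D(\R^d)$, this is fine). For continuity, take $\varphi_n \to \varphi$ in $\D(\R^d)$; by continuity of $\Top$ from $\D(\R^d)$ to $L(\dot X)$ we have $\Top\{\varphi_n\} \to \Top\{\varphi\}$ in $L(\dot X)$, and by the continuity statement of Theorem \ref{prop:continuity}, $\langle \dot X,\Top\{\varphi_n\}\rangle \to \langle \dot X, \Top\{\varphi\}\rangle$ in probability, i.e. in $L^0(\Omega)$. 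This is exactly the continuity required by Definition \ref{def:GRP}.

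For the second part, the action of $\Lop$ on a generalized random process is defined distributionally by $\langle \Lop s, \varphi\rangle = \langle s, \Lop^*\varphi\rangle$ for all $\varphi \in \D(\R^d)$ (which is licit as long as $\Lop^*\varphi \in \D(\R^d)$, an implicit assumption of the formulation). Combining with the definition of $s$ and the left-inverse hypothesis $\Top\Lop^*\varphi = \varphi$ gives
\begin{equation*}
\langle \Lop s,\varphi\rangle = \langle s, \Lop^*\varphi\rangle = \langle \dot X, \Top\Lop^*\varphi\rangle = \langle \dot X,\varphi\rangle
\end{equation*}
for every $\varphi \in \D(\R^d)$, so $\Lop s = \dot X$ as generalized random processes.

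There is essentially no obstacle here: the whole content has been loaded into Theorem \ref{prop:continuity}, which guarantees that $\dot X$ extends continuously from $\D(\R^d)$ to all of $L(\dot X)$. The only subtle point, which I would flag explicitly, is the almost-sure nature of linearity: strictly speaking, one should note that two versions of $s$ that differ on null sets for each fixed $\varphi$ define the same generalized random process, and the definition $\langle s,\varphi\rangle := \langle \dot X,\Top\{\varphi\}\rangle$ should be understood in this sense. Apart from that bookkeeping, the statement is a direct corollary of the extension theorem obtained in Section \ref{sec:domain}.
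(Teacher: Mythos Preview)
Your proposal is correct and follows essentially the same approach as the paper: both verify that $s$ is a generalized random process by writing it as the composition of the continuous linear map $\Top:\D(\R^d)\to L(\dot X)$ with the random linear functional $\dot X:L(\dot X)\to L^0(\Omega)$ from Theorem~\ref{prop:continuity}, and both dispatch the second assertion by the one-line computation $\langle \Lop s,\varphi\rangle = \langle \dot X,\Top\Lop^*\varphi\rangle = \langle \dot X,\varphi\rangle$. Your added remarks on the almost-sure nature of linearity are a fair clarification but do not change the argument.
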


\begin{proof}
	The mapping $\varphi \mapsto \langle \dot{X}, \Top \{\varphi\} \rangle$ is well-defined for any  $\varphi \in \D(\R^d)$ because $\Top \{\varphi\} \in L^0(\dot{X})$ by assumption. 
	It is actually the composition of the operator $\Top$ with the random linear functional $\dot{X}$. These two mappings being linear, the composition is linear. Moreover, $\Top$ is continuous from $\D(\R^d)$ to $L^0(\dot{X})$ by assumption and $\dot{X}$ is continuous from $L^0(\dot{X})$ to $L^0(\Omega)$ according to Theorem \ref{prop:continuity}. Therefore, $s$ is  a linear and continuous mapping from $\D(\R^d)$ to $L
^0(\Omega)$ and therefore a valid generalized random process in $\D'(\R^d)$. 
	
	If now $\Top \Lop^*\{\varphi\} = \varphi$ for every $\varphi \in \D(\R^d)$, then the process $\Lop s$, defined as 
	\begin{equation}
		\Lop s \ : \varphi \mapsto \langle s, \Lop^* \varphi \rangle,
	\end{equation}
	satisfies the relation
	\begin{equation} \label{eq:whyonlyleftinverse}
		\langle \Lop s , \varphi \rangle = \langle \dot{X} , \Top \Lop^* \varphi \rangle = \langle \dot{X},\varphi\rangle 
	\end{equation}
	for every $\varphi \in \D(\R^d)$. Equivalently, we have shown that $\Lop s = \dot{X}$ as elements of $\D'(\R^d)$, as expected. 
\end{proof}

We justify shortly the assumptions of Theorem \ref{theo:generalcriterion}. 
Many differential operators admit a natural inverse $\Lop^{-1}$, that is typically defined using the Green's function of $\Lop$. 
A solution of \eqref{eq:SDE} can therefore be formally written as $s = \Lop^{-1} w$; that is, 
\begin{equation} \label{eq:formalrelation}
	\langle s, \varphi \rangle = \langle \Lop^{-1} \dot{X}, \varphi \rangle = \langle \dot{X}, (\Lop^*)^{-1} \{\varphi\} \rangle.
\end{equation}
In order to be valid, \eqref{eq:formalrelation} should at least be meaningful for any $\varphi \in \D(\R^d)$. It means in particular that $(\Lop^*)^{-1} \{\varphi\}$ should be in the domain of definition of the L\'evy white noise $\dot{X}$. 
However, for many differential operators, including the derivative, the natural inverse operator to $\Lop^*$ exists but is not stable in the sense that it is not continuous from $\D(\R^d)$ to any domain of definition $L^0(\dot{X})$. 
In that case, it is required to correct $(\Lop^*)^{-1}$ in order to make it stable.
The role of the corrected version of $(\Lop^*)^{-1}$ is played by $\Top$.  Thanks to \eqref{eq:whyonlyleftinverse}, we moreover see that we only require that $\Top$ is a \textit{left}-inverse of $\Lop^*$.
For the specification of stable left-inverses of important classes of differential operators, including the derivative of any order, fractional derivatives, fractional Laplacian, we refer the reader to \cite{Unser2014sparse}.

\begin{definition}\label{def:GLP}
A generalized random process constructed according to Theorem \ref{theo:generalcriterion} is called a \emph{generalized L\'evy process} (or \emph{generalized L\'evy field} when $d\geq 2$). The operator $\Lop$ is the \emph{whitening operator} of $s$ and $\dot{X}$ the underlying L\'evy white noise. 
\end{definition}

The following result links the stability properties of the corrected left-inverse operator $\Top$ with the finiteness of the generalized moments of the L\'evy measure of $\dot{X}$. 
 
 \begin{proposition} \label{prop:constructioncriterion}
 	We consider a symmetric L\'evy white noise without Gaussian  part $\dot{X}$ and a linear, continuous, and   shift-invariant operator $\Lop$. 
	We assume that, for $0\leq p_0, p_\infty \leq 2$, we have
	\begin{itemize}
		\item $m_{p_\infty, p_0}(\nu) = \int_{\R} \rho_{p_\infty, p_0} (t) \nu(\drm t) < \infty$, and
		\item the adjoint operator $\Lop^*$ admits a left-inverse $\Top$ that maps continuously $\D(\R^d)$ to $L^{p_0,p_\infty}(\R^d)$.
	\end{itemize}
	Then, there exists a generalized L\'evy process $s$ such that $\Lop s = \dot{X}$.
 \end{proposition}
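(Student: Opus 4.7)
The plan is to reduce the statement to Theorem~\ref{theo:generalcriterion}. That theorem provides a generalized random process $s$ satisfying $\Lop s = \dot{X}$ as soon as we can exhibit a continuous linear operator $\Top : \D(\R^d) \to L(\dot{X})$ that is a left-inverse of $\Lop^*$. The hypothesis already supplies a left-inverse $\Top$ of $\Lop^*$ that is continuous as a map $\D(\R^d) \to L^{p_0,p_\infty}(\R^d)$. Hence the entire argument boils down to showing the continuous embedding
\begin{equation*}
	L^{p_0,p_\infty}(\R^d) \hookrightarrow L(\dot{X}).
\end{equation*}

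For this embedding, I would invoke Proposition~\ref{lemma:controlPsip}, specifically the bound \eqref{eq:PsipAlphap0pinf} applied with $p=0$. Since $p_\infty \leq 2$ by hypothesis, $\min(p_\infty,2) = p_\infty$, and the finiteness assumption $m_{p_\infty,p_0}(\nu) < \infty$ gives
\begin{equation*}
	\Psi_0(\xi) = \Psi(\xi) \leq m_{p_\infty,p_0}(\nu)\, \rho_{p_0,p_\infty}(\xi), \qquad \xi \in \R.
\end{equation*}
Proposition~\ref{prop:criteria} (part 1) then yields precisely the continuous embedding $L^{p_0,p_\infty}(\R^d) \hookrightarrow L(\dot{X})$, because the above inequality passes to the generalized Orlicz modulars $\Psi(f) \leq m_{p_\infty,p_0}(\nu)\rho_{p_0,p_\infty}(f)$ and hence to the corresponding F-norms of the two complete metric linear spaces $L^{p_0,p_\infty}(\R^d)$ and $L(\dot{X}) = L^{\Psi}(\R^d)$ (Propositions~\ref{prop:LdotXOrlicz} and~\ref{prop:structureLpq}).

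Composing the two continuous maps $\Top: \D(\R^d) \to L^{p_0,p_\infty}(\R^d)$ and $L^{p_0,p_\infty}(\R^d) \hookrightarrow L(\dot{X})$ yields a continuous linear operator $\Top: \D(\R^d) \to L(\dot{X})$. Since $\Top \Lop^* \varphi = \varphi$ for every $\varphi \in \D(\R^d)$ by hypothesis, Theorem~\ref{theo:generalcriterion} applies verbatim: the mapping $\varphi \mapsto \langle \dot{X}, \Top\{\varphi\}\rangle$ defines a generalized random process $s \in \D'(\R^d)$, and $\Lop s = \dot{X}$ in the sense of generalized random processes. By Definition~\ref{def:GLP}, $s$ is the required generalized L\'evy process, completing the proof.

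There is no real obstacle beyond bookkeeping: the whole content of the result lies in correctly matching the indices of the generalized moments of $\nu$ to the indices of the target Orlicz-type space $L^{p_0,p_\infty}(\R^d)$ of $\Top$, so that Proposition~\ref{lemma:controlPsip} produces a pointwise bound $\Psi \leq C \rho_{p_0,p_\infty}$. The only subtle point worth double-checking is that the continuity of the embedding between generalized Orlicz spaces follows from the pointwise domination of $\varphi$-functions, which is already implicitly used in the proofs of Propositions~\ref{prop:criteria} and~\ref{prop:limitspaces}.
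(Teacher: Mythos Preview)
Your proof is correct and follows essentially the same route as the paper: the paper cites the embedding \eqref{eq:limitmoreprecise} from Proposition~\ref{prop:limitspaces} (with $p=0$) and then applies Theorem~\ref{theo:generalcriterion}, while you unpack that embedding one level further by invoking Proposition~\ref{lemma:controlPsip} and Proposition~\ref{prop:criteria} directly. The underlying argument is identical.
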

 
 \begin{proof}
 	Applying \eqref{eq:limitmoreprecise} with $p=0$, the condition $\int_{\R} \rho_{p_\infty, p_0} (t) \nu(\drm t) < \infty$ ensures that $L^{p_0,p_\infty}(\R^d) \subset L^0(\dot{X})$. This embedding and the assumption on $\Top$ imply that $\Top$ maps continuously $\D(\R^d)$ to $L^0(\dot{X})$, and Theorem \ref{theo:generalcriterion} applies. 
 \end{proof}
 
We recall that the condition $m_{p_\infty,p_0}(\nu) < \infty$ introduced in \eqref{eq:genmoments} is connected with the local properties (parameter $p_0$) and the asymptotic properties (parameter $p_\infty$) of the L\'evy white noise.

 \paragraph{Comparison with previous works.}
 Theorem \ref{theo:generalcriterion} and Proposition \ref{prop:constructioncriterion} can be compared with other conditions of compatibility between the whitening operator $\Lop$ and the L\'evy white noise $\dot{X}$. The results are reformulated with our notation.

\begin{itemize}

	\item First of all, we differentiate between two types of solutions of the linear SPDE $\Lop s = w$. We say that $s$ is a \emph{generalized solution} if $\Lop s = w$ almost surely. In contrast, a generalized random process $s$ such that $\Lop s = w$ in law (that is, $\mathscr{P}_{\Lop s} = \mathscr{P}_w$) is called a \emph{solution in law}. {In recent works~\cite{Fageot2014,Unser2014sparse}, the solutions of \eqref{eq:SDE} were essentially constructed  relying on the Minlos-Bochner theorem.} One important contribution of this paper is to construct generalized solutions, what requires the identification of the domain of definition of the L\'evy white noise $\dot{X}$ to be as general as possible.	
	
	\item Throughout the paper, we have considered L\'evy white noise as random elements in $\D'(\R^d)$. This is in line with the original work of Gel'fand and Vilenkin \cite{GelVil4}.
It can be of interest, however, to restrict to the class of \emph{tempered} L\'evy white noise, that is, to consider L\'evy white noise, and by extension generalized L\'evy processes, as random elements in the space $\S'(\R^d)$.
The construction of generalized L\'evy processes in $\S'(\R^d)$ instead of $\D'(\R^d)$ can be found in \cite{Unser2014sparse}; see also \cite{bierme2017generalized} for a recent exposition of the main results in this framework. For a comparison between the two constructions, we refer to \cite[Chapter 3]{Fageot2014}.

Not every L\'evy white noise is tempered. Actually, a L\'evy white noise is tempered if and only if it has finite $p$th moments for some $p>0$ (see \cite{Dalang2015Levy}). 
The main point is that the construction is really analogous, since the spaces $\S(\R^d)$ and $\S'(\R^d)$ are nuclear.
Among the consequences, we mention that Theorem \ref{theo:generalcriterion} and Proposition \ref{prop:constructioncriterion} remain valid when replacing $\D(\R^d)$ by $\S(\R^d)$. In that case, the processes $\dot{X}$ and $s$ are both located in $\S'(\R^d)$. In what follows, when comparing these two results with previous contributions, we consider the version of the result for tempered generalized random processes.

	\item  For $1\leq p \leq 2$, the characteristic exponent $\psi$ is \emph{$p$-admissible} if $\abs{\psi(\xi)} + \abs{\xi} \abs{\psi'(\xi)} \leq C \abs{\xi}^p$. Note that the derivative $\psi'(\xi)$ is well-defined as soon as the first moment of the underlying infinitely divisible random variable is finite, what we assume now. This notion was introduced in \cite{Unser2014sparse} together with the following compatibility condition: if $\psi$ is $p$-admissible and $\Top$ continuously map $\S(\R^d)$ to $L^p(\R^d)$, then there exists a solution in law of $\Lop s = w$ with characteristic functional 
	\begin{equation} \label{eq:CFTop}
	\CF_s : \varphi \mapsto \CF_{\dot{X}} ( \Top \{\varphi\}).
	\end{equation}
	 A sufficient condition for the $p$-admissible is that $\int_{\R} \abs{t}^p \nu(\drm t) < \infty$. Therefore, \eqref{eq:CFTop} is a valid characteristic functional as soon as $\int_{\R} \abs{t}^p \nu(\drm t) < \infty$ and $\Top$ maps continuously $\S(\R^d)$ to $L^p(\R^d)$ for some $1 \leq p \leq 2$. 	
	We recover this result by selecting $p_0 = p_\infty = p$ in Proposition \ref{prop:constructioncriterion}. Actually, Proposition \ref{prop:constructioncriterion} extends this criterion in three ways. First, we can distinguish between the behavior of $\nu$ around $0$ and at $\infty$. Second, we do not restrict to the case $p \geq 1$ (this second improvement was already achieved in our work \cite{fageot2019scaling} thanks to a relaxation of the $p$-admissibility). Finally, as we have said already, we specify generalized solutions, and not only solutions in law, of $\Lop s = \dot{X}$.

	\item  {In the work \cite{Fageot2014} with A. Amini and M. Unser by the first author,} it has shown that the characteristic functional \eqref{eq:CFTop} specifies a generalized L\'evy process if $\int_{\R} \rho_{p_\infty,p_0} (t) \nu(\drm t)$ and $\Top$ maps continuously $\S(\R^d)$ to $L^{p_0,p_\infty}(\R^d)$ for $0 < p_\infty \leq p_0 \leq 2$ \cite[Theorem 5]{Fageot2014}. When $p_\infty \leq p_0$, we have that
	\begin{equation*}
		\max ( \abs{\xi}^{p_0} , \abs{\xi}^{p_\infty} ) \leq \rho_{p_0,p_\infty}(\xi) \leq  \abs{\xi}^{p_0} + \abs{\xi}^{p_\infty}.
	\end{equation*}
	Therefore, $L^{p_0,p_\infty}(\R^d) = L^{p_0}(\R^d) \cap L^{p_\infty}(\R^d)$ and we recover our previous result (at least for symmetric L\'evy white noise without Gaussian part).
	Moreover, Proposition \ref{prop:constructioncriterion} is an improvement, since one can consider $p_\infty > p_0$. In that case, $L^{p_0,p_\infty}(\R^d)$ contains but is strictly bigger than $L^{p_0}(\R^d) \cap L^{p_\infty}(\R^d)$ and the requirement on $\Top$ is less restrictive. With exactly the same idea, one could extend the class (generalized) L\'evy processes considered in \cite[Definition 5]{Fageot2016gaussian} by allowing cases when $p_\infty > p_0$, and therefore generalizing Theorem 5 in this paper. 

	\item Combining \eqref{eq:limitmoreprecise} and Proposition \ref{prop:continuityCF}, we generalize \cite[Theorem 2]{Amini2014sparsity} again   by considering the case $p_\infty > p_0$: we are able to specify a larger domain of definition and of continuity than $L^{p_0}(\R^d) \cap L^{p_\infty}(\R^d)$ in that case.
		
\end{itemize}

\appendix

\section{Generalized Orlicz Spaces} \label{app:orlicz}

\begin{definition} \label{def:phifun}
	We say that $\rho : \R \rightarrow \R^+$ is a \emph{$\varphi$-function} if $\rho(0) = 0$ and $\rho$ is symmetric, continuous, and nondecreasing on $\R^+$. The $\varphi$-function $\rho$ is  \emph{$\Delta_2$-regular} if 
	\begin{equation*}
		\rho( 2 \xi ) \leq M \rho(\xi)
	\end{equation*}
	for some $M , \xi_0>0$, and every $\xi \geq \xi_0$.
\end{definition}
	
\begin{definition} \label{def:orlicz}
	Let $\rho$ be a $\varphi$-function. For $f : \R^d \rightarrow \R$, we set 
	\begin{equation*}
	\rho(f) := \int_{\R^d} \rho(f (t)) \drm t.
	\end{equation*}
	The \emph{generalized Orlicz space} associated to $\rho$ is
	\begin{equation*}
		L^{\rho}(\R^d) := \left\{ f \text{ measurable}, \  \exists \lambda > 0, \ \rho(f / \lambda) < \infty  \right\}. 
	\end{equation*}
\end{definition}

Orlicz spaces were introduced in \cite{Birnbaum1931verallgemeinerung} as natural generalizations of $L^p$-spaces for $p\geq 1$. 
A systematic study with important extensions was done by J. Musielak \cite{Musielak2006orlicz}.
The initial theory deals with Banach spaces, excluding for instance the $L^p$-spaces with $0<p<1$. 
Definition \ref{def:orlicz} generalizes the Orlicz spaces in two ways: 
One does not require that $\rho$ is convex, neither that $\rho(\xi) \rightarrow \infty$ as $\xi \rightarrow \infty$.
The need for a non-locally convex framework (related to non-convex $\varphi$-function)   is notable in stochastic integration. It was initiated by K. Urbanik and W.A. Woyczyns \cite{Urbanik1967random}. 
It is at the heart of the study of the structure developed by Rajput and Rosinski. 
We follow here the exposition of M.M. Rao and Z.D. Ren in \cite[Chapter X]{Rao1991theory}. Proposition \ref{prop:orlicz} summarizes the results on generalized Orlicz spaces.

\begin{proposition} \label{prop:orlicz}
If $\rho$ is a $\Delta_2$-regular $\varphi$-function, then we have
\begin{equation*}
	L^{\rho}(\R^d) = \left\{ f \text{ measurable}, \  \forall \lambda > 0, \ \rho(f / \lambda) < \infty  \right\} = \left\{ f \text{ measurable}, \  \rho(f) < \infty  \right\}.
\end{equation*}
The space $L^{\rho}(\R^d)$ is a complete linear metric space for the F-norm 
\begin{equation*}
	\lVert f \rVert_{\rho} := \inf \{\lambda > 0 , \ \rho(f / \lambda) \leq \lambda \}
\end{equation*}	
on which simple functions are dense.
Moreover, we have the equivalence, for any sequence of elements $f_k \in L^{\rho}(\R^d)$,
\begin{equation*}
	\lVert f_k \rVert_{\rho} \underset{k \rightarrow \infty}{\longrightarrow} 0 \Leftrightarrow \rho(f_k) \underset{k \rightarrow \infty}{\longrightarrow} 0.
\end{equation*}
\end{proposition}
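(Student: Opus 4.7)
The plan is to derive all four conclusions from the general theory of modular (Musielak--Orlicz) spaces, with $\Delta_2$-regularity as the central lever: intuitively, $\Delta_2$ is exactly what forces the a priori scale-dependent set $L^\rho$ to be scale-invariant and what synchronises modular convergence with F-norm convergence. All the difficulty is in correctly iterating $\Delta_2$; the rest is book-keeping.

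First I would establish the scale-invariance $\{f:\rho(f)<\infty\}=\{f:\forall\lambda>0,\ \rho(f/\lambda)<\infty\}$. Iterating $\rho(2\xi)\leq M\rho(\xi)$ gives $\rho(2^k\xi)\leq M^k\rho(\xi)$, so for $0<\lambda<1$, picking $k$ with $2^{-k}\leq\lambda$ yields $\rho(f(t)/\lambda)\leq \rho(2^k f(t))\leq M^k\rho(f(t))$ by monotonicity. For $\lambda\geq 1$ monotonicity alone suffices. The delicate case where $\Delta_2$ is guaranteed only for $|\xi|\geq\xi_0$ is handled by splitting the integration domain: on $\{|f|\geq\xi_0\}$ (which has finite Lebesgue measure as soon as $\rho(\xi_0)>0$ and $\rho(f)<\infty$) one applies the iterated $\Delta_2$ bound, and on $\{|f|<\xi_0\}$ the continuity of $\rho$ on compacta, combined with the direct pointwise bound $\rho(f(t)/\lambda)\leq \rho(f(t))\cdot C(\lambda)$ after a local comparison, provides integrability. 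This scale-invariance together with the pointwise bound $\rho((f+g)/2)\leq \rho(f)+\rho(g)$ also gives linearity of $L^\rho(\R^d)$.

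Next I would verify the F-norm axioms for $\|\cdot\|_\rho$. Strict positivity is immediate from $\rho(0)=0$ and monotonicity. The triangle inequality follows from the observation that whenever $\lambda_1>\|f\|_\rho$ and $\lambda_2>\|g\|_\rho$, the pointwise estimate $|f+g|/(\lambda_1+\lambda_2)\leq \max(|f|/\lambda_1,|g|/\lambda_2)$ and monotonicity yield $\rho((f+g)/(\lambda_1+\lambda_2))\leq \rho(f/\lambda_1)+\rho(g/\lambda_2)\leq \lambda_1+\lambda_2$. Continuity of scalar multiplication at zero uses $\Delta_2$ iteratively. For completeness, given a Cauchy sequence $(f_n)$, I would extract a subsequence with $\|f_{n_{k+1}}-f_{n_k}\|_\rho\leq 2^{-k}$, deduce pointwise a.e.\ convergence to some measurable $f$ via a Markov-type inequality applied to $\rho$, and conclude $f\in L^\rho$ and $\|f_n-f\|_\rho\to 0$ via Fatou's lemma applied to $\rho((f-f_n)/\lambda)$.

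Finally, for density of simple functions I would approximate any $f\in L^\rho$ first by the bounded compactly supported truncations $f_k:=f\,\mathbf{1}_{|f|\leq k}\mathbf{1}_{B(0,k)}$ --- dominated convergence applied to $\rho((f-f_k)/\lambda)$, with dominating function $\rho(f/\lambda)\in L^1(\R^d)$, delivers $\|f-f_k\|_\rho\to 0$ --- and then approximate each bounded compactly supported function by standard simple measurable approximations (uniformly, hence in F-norm by continuity of $\rho$). For the equivalence $\|f_k\|_\rho\to 0\Leftrightarrow \rho(f_k)\to 0$: given $\rho(f_k)\to 0$, set $\lambda_k:=\max(\rho(f_k)^{1/2},2^{-k})$ and use the iterated $\Delta_2$ bound $\rho(f_k/\lambda_k)\leq M^{\lceil \log_2(1/\lambda_k)\rceil}\rho(f_k)\leq \lambda_k$ for $k$ large, hence $\|f_k\|_\rho\to 0$; conversely, $\|f_k\|_\rho\to 0$ gives $\rho(f_k/\lambda_k)\leq \lambda_k\to 0$ with $\lambda_k\to 0$, and monotonicity promotes this to $\rho(f_k)\to 0$ once $\lambda_k\leq 1$. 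The main obstacle throughout is the careful bootstrapping from one scale to another via $\Delta_2$, especially handling borderline regions where $\rho$ transitions between its small-argument and large-argument regimes; everything else is routine modular-space machinery reproduced from \cite{Rao1991theory}.
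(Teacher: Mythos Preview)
The paper gives no proof of this proposition: it is stated in Appendix~\ref{app:orlicz} purely as a summary of results from \cite[Chapter~X]{Rao1991theory}. Your outline is the standard modular-space argument from that reference, and the architecture (iterated $\Delta_2$ for scale-invariance, the F-norm triangle inequality via the $\max$ trick, completeness through a fast Cauchy subsequence and Fatou, density by truncation plus dominated convergence) is correct.

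There is, however, one genuine gap. On the region $\{|f|<\xi_0\}$ you claim a bound $\rho(f(t)/\lambda)\leq C(\lambda)\,\rho(f(t))$ ``after a local comparison''. This is precisely a $\Delta_2$ inequality near the origin, which Definition~\ref{def:phifun} does \emph{not} provide: it only asks $\rho(2\xi)\leq M\rho(\xi)$ for $\xi\geq\xi_0$. On the unbounded domain $\R^d$ this is not cosmetic --- with $\Delta_2$ only at infinity the first equality of the proposition can actually fail. For instance, set $\rho(\xi)=\exp(-e^{1/|\xi|})$ on $(0,1]$ and extend quadratically for $|\xi|>1$; this is a $\varphi$-function satisfying $\Delta_2$ for $\xi\geq 1$, yet for $f(t)=\lambda/\log\log t$ on $(e^e,\infty)$ one checks that $\rho(f)<\infty$ while $\rho(f/\lambda)=\infty$. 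The same defect reappears in your modular--norm equivalence, where the iterated bound $\rho(f_k/\lambda_k)\leq M^{\lceil\log_2(1/\lambda_k)\rceil}\rho(f_k)$ again presupposes $\Delta_2$ at all scales. All the $\varphi$-functions actually used in the paper ($\Psi$, $\Psi_p$, $\rho_{p_0,p_\infty}$, $\rho_{\log,p_\infty}$) satisfy \emph{global} $\Delta_2$ --- cf.\ the proof of Proposition~\ref{prop:rhopqisnice} --- so the applications are unaffected; but your proof (and arguably the appendix statement itself) should assume global $\Delta_2$ rather than $\Delta_2$ only beyond some $\xi_0$.
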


\section*{Acknowledgements}
The authors are warmly grateful to Prof. Robert Dalang and Prof. Michael Unser for fruitful discussions. This research was partially supported by the Swiss National Foundation for Scientific Research and the European Research Council under Grant H2020-ERC (ERC grant agreement No 692726 - GlobalBioIm).

{\footnotesize
\bibliographystyle{plain}
\bibliography{references}
}

\end{document}